\newfont{\Bbb}{msbm10 scaled\magstephalf}
\documentclass[12pt,reqno]{amsart}
\usepackage{amsfonts}
\usepackage{amsmath, amsthm, amscd, amsfonts}
\usepackage{amssymb}
\usepackage{amsmath}
\usepackage{xcolor}
\usepackage{amscd}
\usepackage{graphicx}
 \usepackage{epstopdf}
\allowdisplaybreaks[4]
\pagestyle{myheadings}
 \newtheorem{thm}{Theorem}[section]
 \newtheorem{cor}[thm]{Corollary}
 \newtheorem{lem}[thm]{Lemma}
 \newtheorem{prop}[thm]{Proposition}
 \theoremstyle{definition}
 \newtheorem{defn}[thm]{Definition}
\theoremstyle{remark}
 \newtheorem{rem}[thm]{Remark}
 
 \numberwithin{equation}{section}
\newcommand{\pf}{\begin{proof}}
\newcommand{\zb}{\end{proof}}
\newcommand{\la}{\langle}
\newcommand{\ra}{\rangle}
\newcommand{\ol}{\overline}
\newcommand{\ma}{\mathcal}

\def\CC{{\mathbb C}}\def\DD{{\mathbb D}}

\def\lcm{\mathop{\rm lcm}\nolimits}
\def\Id{I} 

\begin{document}

\title[nearly invariant subspaces]{Nearly invariant subspaces for shift semigroups}
\author[Y. Liang]{Yuxia Liang}
\address{Yuxia Liang \newline School of Mathematical Sciences,
Tianjin Normal University, Tianjin 300387, P.R. China.} \email{liangyx1986@126.com}
\author[J. R. Partington]{Jonathan R. Partington}
\address{Jonathan R. Partington \newline School of Mathematics,
  University of Leeds, Leeds LS2 9JT, United Kingdom.}
 \email{J.R.Partington@leeds.ac.uk}
\subjclass[2010]{47B38, 47A15,	43A15.}
\keywords{Nearly invariant subspace, $C_0$-semigroup, shift semigroup, model space}
\begin{abstract} Let $\{T(t)\}_{t\geq0}$ be a $C_0$-semigroup
on an infinite dimensional separable Hilbert space;  a suitable definition of near $\{T(t)^*\}_{t\geq0}$ invariance of a subspace is presented in this paper. A series of prototypical examples for minimal nearly $\{S(t)^*\}_{t\geq0}$ invariant subspaces for the shift semigroup $\{S(t)\}_{t\geq0}$ on $L^2(0,\infty)$ are demonstrated, which have close links with nearly $T_{\theta}^*$ invariance on Hardy spaces of the unit disk for an inner function $\theta$. Especially, the corresponding subspaces on Hardy spaces of the right half-plane and the unit disk are related to model spaces. This work further includes a discussion on the structure of the closure of certain subspaces related to model spaces in Hardy spaces.
\end{abstract}

\maketitle

\section{Introduction}
The main aim of this paper is to investigate the near invariance problem for the shift semigroup $\{S(t)\}_{t\geq 0}$ on $L^2(0,\infty)$. In particular, we focus on characterizing a series of examples for the smallest nearly $\{S(t)^*\}_{t\geq 0}$ invariant subspaces containing certain typical functions.   The corresponding subspaces of these examples behave as model spaces in Hardy spaces of the right half-plane and the unit disc, which bring us a deeper understanding of near invariance and model spaces.\vspace{0.5mm}

Let $\ma{H}$ denote a separable infinite-dimensional Hilbert space and $\ma{L}(\ma{H})$ be the space of bounded linear operator on $\ma{H}.$ If $\{\ma{M}_i\}_{i\in I}$ is a family of subsets of the Hilbert space $\ma{H},$ we denote $\bigvee_{i\in I} \ma{M}_i$ the closed linear span generated by $\bigcup_{i\in I}\ma{M}_i$. Let the notation $\overline{\ma{M}}$ denote the closure of $\ma{M}$ for any subset $\ma{M}$ of $\ma{H}.$ Here and throughout this paper, a \emph{subspace} means a \emph{closed subspace}.\vspace{1mm}

A family $\{T(t)\}_{t\geq 0}$ in $\ma{L}(\ma{H})$ is called a $C_0$-semigroup if $T(0)=\Id,\; T(t+s)=T(t)T(s)\;\mbox{for all}\;s, t\geq 0$ and $\lim\limits_{t\rightarrow 0} T(t) x=x$ for any $x\in \ma{H}.$ Given a $C_0$-semigroup $\{T(t)\}_{t\geq 0}$ on a Hilbert space $\ma{H},$ there exists a closed and densely defined linear operator $A$ that determines the semigroup uniquely, called the generator of $\{T(t)\}_{t\geq 0},$ defined as $$Ax:=\lim\limits_{t\rightarrow 0^+} \frac{T(t)x-x}{t},$$ where the domain $D(A)$ of $A$ consists of all $x\in \ma{H}$ for which this limit exists. If $1$ is in the set $\rho(A):=\{\lambda\in \mathbb{C},\;A-\lambda I: D(A)\subset \ma{H}\rightarrow \ma{H}\;\mbox{is bijective}\},$ then $(A-I)^{-1}$ is a bounded operator on $\ma{H}$ by the closed graph theorem, and the Cayley transform of $A$ defined by
$$T:=(A+I)(A-I)^{-1}$$
is a bounded operator on $\ma{H},$ since $T-I=2(A-I)^{-1}.$ The operator $T$ is
the cogenerator and  determines the semigroup uniquely, since the generator $A$ does.  \vspace{2mm}

Let $T\in \ma{L}(\ma{H})$ be a left invertible isometric operator with finite multiplicity on $\ma{H}$. We recall
that a subspace $\ma{M}\subset \ma{H}$ is nearly $T^{-1}$ invariant if whenever $g\in \ma{H}$ and $Tg\in \ma{M},$ then $g\in \ma{M}.$
In \cite{LP2} we have shown that the nearly $T^{-1}$ invariant subspaces can be represented in terms of invariant subspaces under the backward shift. Especially, our result implies a characterization for the nearly $T_{\theta}^*$ invariant subspaces in $H^2(\mathbb{D})$ when $\theta$ is a finite Blaschke product. Here $H^2(\mathbb{D})$ is the Hardy space defined on the unit disc with the form $$H^2(\mathbb{D})=\{f:\;\mathbb{D}\rightarrow \mathbb{C}\;\mbox{analytic}, f(z)=\sum_{k=0}^\infty a_kz^k,\;\|f\|^2=\sum_{k=0}^\infty |a_k|^2<\infty\}.$$

However, there is no such simple description for the nearly $T_{\theta}^*$  invariant subspaces in $H^2(\mathbb{D})$ for an infinite Blaschke product $\theta$. In this paper, we will explore some related investigations in this direction.  Before proceeding, we recall some preliminaries appearing in many books, including \cite{P1,P2} for
 a detailed discussion.  For an infinite Blaschke product $\theta$, the Toeplitz operator $T_{\theta}^*: H^2(\mathbb{D})\rightarrow H^2(\mathbb{D})$ is universal (see, e.g. \cite{Ca}) and it is similar to the backward shift $S(1)^*$ on $L^2(0,\infty)$, given by $S(1)^*f(t)=f(t+1).$  In general, the shift semigroup $S(t):\; L^2(0,\infty) \rightarrow L^2(0,\infty)$ with $t\geq 0$ is defined by \begin{align}(S(t) f)(\zeta)=\left\{
                             \begin{array}{ll}
                               0, & \zeta\leq t, \\
                               f(\zeta-t), & \zeta>t.
                             \end{array}
                           \right.\label{shift}\end{align}
It is obvious that $S(1)^*$ is an element of the adjoint semigroup  $\{S(t)^*\}_{t\geq 0}$ given as $(S(t)^{*} f)(\zeta)=f(\zeta+t)$.

 We recall $H^2(\mathbb{C}_+)$ defined on the right half-plane $\mathbb{C}_+=\{s=x+iy,\;x>0\}$ contains all analytic functions $f: \mathbb{C}_+\rightarrow \mathbb{C}$ such that $$\|f\|_{H^2(\mathbb{C}_+)}^2=\sup_{x>0}\int_{-\infty}^\infty|f(x+iy)|^2dy<\infty.$$ The $2$-sided Laplace transform of $f\in L^1(\mathbb{R})\cap L^2(\mathbb{R})$ is given as \begin{eqnarray}(\ma{L} f)(s)=\int_{-\infty}^\infty e^{-st} f(t)dt.\label{Lap1}\end{eqnarray}
\begin{thm} (Paley-Wiener) The Laplace transform gives a linear isomorphism from $L^2(0,\infty)$ onto $H^2(\mathbb{C}_+)$, such that
$$\|\ma{L}(f)\|_{H^2(\mathbb{C}_+)}=\sqrt{2\pi}
\|f\|_{L^2(0,\infty)}\quad \mbox{for}\;f\in L^2(0,\infty), $$
\end{thm}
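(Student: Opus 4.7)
The plan is to use Plancherel's theorem as the engine, identifying the Laplace transform on each vertical line $\{\Re s = x\}$ with a Fourier transform. For $f \in L^2(0,\infty)$, extend $f$ by zero to $\mathbb{R}$ and write
$$(\ma{L} f)(x+iy) = \int_0^\infty e^{-xt}f(t)\,e^{-iyt}\,dt,$$
which is (up to normalization) the Fourier transform of $t \mapsto e^{-xt}f(t)\chi_{[0,\infty)}(t)$, a function in $L^2(\mathbb{R})$ for every $x>0$. Plancherel then yields
$$\int_{-\infty}^\infty |(\ma{L} f)(x+iy)|^2\,dy = 2\pi \int_0^\infty e^{-2xt}|f(t)|^2\,dt,$$
and monotone convergence as $x \to 0^+$ shows the supremum over $x>0$ equals $2\pi\|f\|_{L^2(0,\infty)}^2$. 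Combined with analyticity of $\ma{L} f$ on $\mathbb{C}_+$ (verified by Morera's theorem or by differentiation under the integral sign, since $|e^{-st}f(t)| \leq e^{-\delta t}|f(t)|$ on any half-plane $\{\Re s \geq \delta > 0\}$), this gives $\ma{L} f \in H^2(\mathbb{C}_+)$ together with the claimed isometric identity, whence $\ma{L}$ is injective with closed range.

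For surjectivity I would invert using Fourier analysis on the boundary. Given $F \in H^2(\mathbb{C}_+)$, standard $H^2$ theory supplies boundary values $F^* \in L^2(\mathbb{R})$ with $\|F^*\|_{L^2(\mathbb{R})} = \|F\|_{H^2(\mathbb{C}_+)}$ and $F(x+i\,\cdot) \to F^*$ in $L^2(\mathbb{R})$ as $x \to 0^+$. Define $f$ as $(2\pi)^{-1}$ times the inverse Fourier transform of $y \mapsto F^*(iy)$; then $f \in L^2(\mathbb{R})$ automatically. The essential task is to show $f$ vanishes on $(-\infty, 0)$, which is exactly where the analyticity of $F$ in the open half-plane, rather than mere membership of $F^*$ in $L^2$, must be used. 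My approach is to introduce the family
$$f_x(t) := \frac{1}{2\pi}\int_{-\infty}^\infty F(x+iy)\,e^{(x+iy)t}\,dy \qquad (x>0),$$
show that $f_x$ is independent of $x$ using Cauchy's theorem together with $H^2$ decay estimates to justify the contour shift, and then, for fixed $t<0$, let $x \to +\infty$ and exploit $|e^{(x+iy)t}| = e^{xt} \to 0$ against the uniform $L^2(dy)$-bound on $F(x+iy)$ to conclude $f_x(t)=0$. Sending $x \to 0^+$ identifies $f$ with any $f_x$, so $f$ is supported in $[0,\infty)$; finally $\ma{L} f = F$ follows from the Plancherel identity applied on each vertical line together with Fourier uniqueness.

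Every step except the support statement reduces to Plancherel plus dominated/monotone convergence. The place where genuine work is required, and which I expect to be the main obstacle, is the contour-shift argument that pins the inverse Fourier transform of $F^*$ to the positive half-line; this is the sole step that uses analyticity of $F$ in the interior of $\mathbb{C}_+$ rather than merely boundary $L^2$-control.
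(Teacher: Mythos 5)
The paper does not prove this statement: it is quoted as the classical Paley--Wiener theorem, with the reader referred to the standard references \cite{P1,P2} for details. Your argument is the standard textbook proof (essentially Rudin's, \emph{Real and Complex Analysis}, Thm.~19.2): Plancherel on each vertical line plus monotone convergence for the isometry, and a contour-shift/inverse-Fourier argument for surjectivity, with the support statement correctly identified as the only place where interior analyticity is genuinely used. The outline is sound. Two small points to tighten if you write it out in full. First, for fixed $t<0$ the quantity $f_x(t)$ is only defined as an a.e.\ value of an $L^2$-Fourier transform, so the conclusion ``$f_x(t)=0$'' should be run as an $L^2$ estimate on a half-line: for $a>0$,
$$\int_{-\infty}^{-a}|f_x(t)|^2\,dt\;\leq\;\frac{e^{-2ax}}{2\pi}\,\|F(x+i\cdot)\|_{L^2(\mathbb{R})}^2\;\longrightarrow\;0\quad(x\to+\infty),$$
which, combined with the independence of $f_x$ from $x$, gives $f=0$ a.e.\ on $(-\infty,0)$. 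Second, the contour shift needs the usual justification: on a strip $0<x_1\leq \Re s\leq x_2$ one has $\int_{x_1}^{x_2}\int_{\mathbb{R}}|F|^2\,dy\,dx<\infty$, so by Fubini there exist sequences $y_n\to\pm\infty$ along which the horizontal sides of the rectangle contribute nothing; this is exactly the ``$H^2$ decay estimate'' you allude to, and it does require writing down. With those details supplied your proof is complete and is the expected one.
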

 It follows that $S(1)^*$ is unitarily equivalent to the adjoint of the multiplication operator $M_{e^{-s}}$ on the Hardy space $H^2(\mathbb{C}_+)$. Regarding the Hardy spaces $H^2(\mathbb{D})$ and $H^2(\mathbb{C}_+)$, there exists an isometric isomorphism $V:\; H^2(\mathbb{D})\rightarrow H^2(\mathbb{C}_+)$ given as
\begin{eqnarray} (Vf)(s)=\frac{1}{\sqrt{\pi}(1+s)}f(M(s)),\label{Vmap}\end{eqnarray} where $M: s\rightarrow \frac{1-s}{1+s}$ is a self-inverse bijection from  $\mathbb{C}_+$ to  $\mathbb{D}$.

Meanwhile, the inverse map $V^{-1}: H^2(\mathbb{C}_+)\rightarrow H^2(\mathbb{D})$ is defined by
\begin{eqnarray} (V^{-1}g)(z)=\frac{2\sqrt{\pi}}{1+z}g(M(z)).\label{V-map}\end{eqnarray}

Of great importance in operator-related function theory are the shift operators, ubiquitous in applications. It is well known that the inner functions arose from the representation of shift invariant subspaces in $H^2(\mathbb{D})$.
Specifically, we say $u$ is an inner function if it is a bounded analytic function on $\mathbb{D}$ such that $|u(\zeta)|=1$ for almost every $\zeta\in \mathbb{T}.$
The celebrated theorem of Beurling says that the nontrivial invariant subspaces of $H^2(\mathbb{D})$ for the forward shift operator $Sf(z)=zf(z)$ are precisely $uH^2(\mathbb{D})$ with $u$ is an inner function. At the same time,
the model space denoted by $K_u:=(uH^2(\mathbb{D}))^\perp=H^2(\mathbb{D})\ominus uH^2(\mathbb{D})$ is an invariant subspace of the backward shift operator $S^*f(z)=(f(z)-f(0))/z$; for a more detailed exposition on inner functions and model spaces see, e.g. \cite{CGP1,GMR}.

Research on invariant subspaces leads to the concept of near invariance. The study of nearly invariant subspaces for the backward shift in $H^2(\mathbb{D})$ was first explored by Hayashi \cite{Ha}, Hitt~\cite{hitt}, and then Sarason \cite{Sa1,Sa2} in relation with kernels of Toeplitz operators. Afterwards, C\^{a}mara and Partington continue the systematic investigations on near invariance and Toeplitz kernels (see, e.g. \cite{CaP1,CaP}). In particular, Hitt proved the following most widely known characterization of nearly $S^*$ invariant subspaces in $H^2(\mathbb{D})$.

\begin{thm}\cite[Proposition 3]{hitt} \label{thm Hitt}The nearly $S^*$ invariant subspaces have the form $M=uK$, with $u\in M$ of unit norm, $u(0)>0,$ and $u$ orthogonal to all elements of $M$ vanishing at the origin, $K$ is an $S^*$ invariant subspace, and the operator of multiplication by $u$ is isometric from $K$ into $H^2(\mathbb{D})$.
\end{thm}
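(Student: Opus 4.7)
My plan is to construct $u$ and $K$ explicitly and then verify each listed property. Let $M_0 := \{f \in M : f(0) = 0\}$, which is closed in $M$ because point evaluation at the origin is a bounded linear functional on $H^2(\mathbb{D})$, and has codimension at most one. The degenerate case $M = M_0$ forces, via near invariance, $S^*M \subseteq M$; the image $S^*f$ must then again vanish at $0$, and an induction on Taylor coefficients gives $M = \{0\}$. Otherwise $\dim(M \ominus M_0) = 1$, and I pick the unit vector $u \in M \ominus M_0$ with $u(0) > 0$; by construction $u$ is orthogonal to $M_0$.

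For each $f \in M$, I build the candidate quotient $g = f/u$ by the recursion $h_0 := f$, $\alpha_n := h_n(0)/u(0)$, and $h_{n+1} := S^*(h_n - \alpha_n u)$. Near invariance keeps $h_{n+1}$ in $M$, and the decomposition $h_n = \alpha_n u + z h_{n+1}$ is orthogonal (since $u \perp M_0$ while $z h_{n+1} \in M_0$), giving the Pythagorean identity
\[
\|h_n\|^2 = |\alpha_n|^2 + \|h_{n+1}\|^2.
\]
Iteration yields $\sum_k |\alpha_k|^2 \leq \|f\|^2$, so $g(z) := \sum_{k \geq 0} \alpha_k z^k$ belongs to $H^2$ with $\|g\| \leq \|f\|$. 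The telescoped identity $f(z) - u(z)\sum_{k<n}\alpha_k z^k = z^n h_n(z)$, combined with the reproducing-kernel bound $|h_n(z)| \leq \|f\|/\sqrt{1-|z|^2}$, forces $z^n h_n(z) \to 0$ on compact subsets of $\mathbb{D}$, so $f = ug$ pointwise on $\mathbb{D}$ and therefore as elements of $H^2$. Setting $K := \{g \in H^2 : ug \in M\} = \{f/u : f \in M\}$, the $S^*$-invariance of $K$ follows directly from the recursion: if $ug = f$, then $u \cdot S^* g = h_1 \in M$.

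The main obstacle is the isometric property of multiplication by $u$ on $K$, equivalently the claim $\|h_n\| \to 0$. Weak convergence $h_n \rightharpoonup 0$ in $H^2$ can be extracted from $h_n - z h_{n+1} = \alpha_n u \to 0$ in norm: a diagonal-subsequence argument shows that any weak cluster point $h_\infty \in M$ must satisfy $h_\infty = z^m \tilde h_m$ for every $m \geq 0$, forcing $h_\infty = 0$. The harder step is the upgrade to strong convergence, where my plan is to combine the monotonicity of $\|h_n\|$ with the factorization $h_n = u \cdot S^{*n} g$ (with $\|S^{*n}g\|_{H^2} \to 0$) and closedness of $M$ to exclude a nonzero limit, or equivalently to establish $H^2$-norm convergence of $u \sum_{k<n}\alpha_k z^k$ to $f$ directly. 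Once $\|h_n\| \to 0$ is secured, the telescoping gives $\|f\|^2 = \sum|\alpha_k|^2 = \|g\|^2$, yielding the isometry from $K$ into $H^2$; the closedness of $K$ is then automatic from the isometric embedding, and all of the stated properties of $u$ and $K$ are in place.
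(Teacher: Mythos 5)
The paper offers no proof of this statement --- it is quoted with a citation to Hitt --- so I am measuring your proposal against the classical argument (Hitt's original, and Sarason's reworking). Your architecture is exactly the standard one: the extremal function $u$ spanning $M\ominus M_0$, the division recursion $h_{n+1}=S^*(h_n-\alpha_n u)$ justified by near invariance, the orthogonal decomposition $h_n=\alpha_n u+zh_{n+1}$ with its telescoped identity $\|f\|^2=\sum_{k<n}|\alpha_k|^2+\|h_n\|^2$, and the locally uniform convergence argument giving $f=ug$ with $\|g\|\le\|f\|$. All of that is correct, as are the identity $h_n=u\,S^{*n}g$ and the resulting $S^*$-invariance of $K$.

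The gap sits exactly where you flag ``the harder step'': you never prove $\|h_n\|\to 0$, you only announce a plan, and the plan cannot work as described. Monotonicity of $\|h_n\|$, weak convergence $h_n\rightharpoonup 0$, and closedness of $M$ are jointly consistent with $\|h_n\|\to\ell>0$ (an orthonormal sequence in any closed infinite-dimensional subspace has all three properties), and the factorization $h_n=u\,S^{*n}g$ with $\|S^{*n}g\|\to0$ does not help, because multiplication by the $H^2$-function $u$ need not be a bounded operator: the telescoping only ever yields $\|g\|\le\|ug\|$, i.e.\ that multiplication by $u$ is bounded \emph{below} on $K$, never the reverse inequality $\|ug\|\le\|g\|$ that constitutes the isometry. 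Note how strong the missing half is: when $K=H^2(\mathbb{D})$ it forces $\int_{\mathbb{T}}|u|^2|p|^2=\int_{\mathbb{T}}|p|^2$ for every polynomial $p$, i.e.\ $|u|=1$ a.e.\ on $\mathbb{T}$, a boundary rigidity statement about $u$ that no soft weak-compactness argument carried out inside $M$ can detect. Without this step you also lose the closedness of $K$ (your ``automatic from the isometric embedding'' is contingent on the isometry, since the contractive direction alone does not make the division map $f\mapsto f/u$ bounded below), and hence the identification of $K$ as a model space or $H^2(\mathbb{D})$. This is the actual content of Hitt's theorem, and both Hitt's proof and Sarason's treatment via de Branges--Rovnyak spaces supply a genuinely different idea here rather than an upgrade of weak to strong convergence.
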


As a nontrivial extension of our recent work in \cite{LP2}, we study the nearly invariant subspaces for the shift semigroup on $L^2(0,\infty),$ which is related to nearly $T_\theta^*$ invariance on $H^2(\mathbb{D})$ for an infinite Blaschke product $\theta.$  To the best of our knowledge, there have been no such investigations, even though there is a long history on invariant subspaces for a $C_0$-semigroup, which are defined as below.

\begin{defn}\label{defn invariant} Given a $C_0$-semigroup $\{T(t)\}_{t\geq0}$ in  $\ma{L}(\ma{H}),$ a subspace $\ma{M}\subseteq \ma{H}$ is said to be $\{T(t)\}_{t\geq0}$ invariant if $T(t)\ma{M}\subset \ma{M}$ for all $t\geq0.$\end{defn}


Based on Definition \ref{defn invariant}  it might be natural to call $\ma{N}$   a nearly $\{T(t)^{*}\}_{t\geq 0}$ invariant subspace if
whenever $T(t)x \in \ma{N}$ for all $t>0,$ then $x\in \ma{N}.$
However all closed subspaces have this property since  $x=\lim\limits_{n \to \infty}T(t_n)x$ for any sequence $(t_n)$
tending to $0$, and so this definition is not useful.
We provide a more suitable definition as follows.

 \begin{defn}\label{defn nearly} Let $\{T(t)\}_{t\geq0}$ be a $C_0$-semigroup in $\ma{L}(\ma{H})$ and $\ma{N}\subseteq \ma{H}$ be a subspace. If for every $x\in \ma{H}$ whenever $T(t)x \in \ma{N}$ for some $t>0,$ then $x\in \ma{N},$ we call $\ma{N}$ a  nearly $\{T(t)^{*}\}_{t\geq 0}$ invariant subspace.\end{defn}

We say $\ma{N}$ is a trivial nearly $\{T(t)^*\}_{t\geq 0}$ invariant subspace if no element in $\ma{N}$ satisfies the above condition in Definition \ref{defn nearly}. And then we study the starting question below.\vspace{1mm}

  \emph{\textbf{\emph{Question 1}.}\;What is  the structure of nontrivial nearly $\{S(t)^*\}_{t\geq 0}$  invariant subspaces of the shift semigroup $\{S(t)\}_{t\geq 0}$ on $L^2(0,\infty)$ given in \eqref{shift}?}\vspace{1mm}

For the shift semigroup $\{S(t)\}_{t\geq 0}$ on $L^2(0,\infty)$, the maps in \eqref{V-map} and \eqref{Lap1} imply the following commutative diagrams.
 \begin{eqnarray*}\begin{CD}
L^2(0,\infty) @>S(t)>> L^2(0,\infty)\\
@VV \ma{L}  V @VV \ma{L} V\\
H^2(\mathbb{C}_+)@>M(t)>> H^2(\mathbb{C}_+)\\
@VV {V^{-1}}    V  @VV  {V^{-1}}   V\\
H^2(\mathbb{D}) @> T(t)>> H^2(\mathbb{D}).
\end{CD}\label{commute1}\end{eqnarray*}
Here the multiplication semigroup $\{M(t)\}_{t\geq 0}$ on $H^2(\mathbb{C}_+)$ is defined by
\begin{eqnarray*}(M(t) g)(s)=e^{-st} g(s),\;s\in \mathbb{C}_+,\end{eqnarray*}
 and $(M(t)^{*} g)(s)=P_{H^2(\CC_+)}e^{st} g(s).$
Moreover, $\{T(t)\}_{t\geq 0}$ on $H^2(\mathbb{D})$ is given as
\begin{eqnarray*}(T(t) h)(z)=\phi^t(z) h(z),\;z\in \mathbb{D},\end{eqnarray*}
and $(T(t)^{*} h)(z)= P_{H^2(\DD)}\phi^{-t}(z)h(z),\;z\in \mathbb{D},$ with $\phi^{t}(z):=\exp\left(-t\frac{1-z}{1+z}\right)$, the power of a standard atomic inner function.\vspace{1mm}

\begin{rem}Since every Toeplitz kernel in $H^2(\mathbb{C}_+)$ or $H^2(\mathbb{D})$ is nearly invariant under dividing by an inner function, so it is also nearly $\{M(t)^*\}_{t\geq 0}$ or $\{T(t)^*\}_{t\geq 0}$ invariant in $H^2(\mathbb{C}_+)$ or $H^2(\mathbb{D}),$ respectively.\end{rem}

It is known that the cogenerator of a $C_0$-semigroup plays an important role in invariant subspaces, and the following theorem holds.
\begin{thm}\cite[Theorem 10-9]{Fu} \label{thm cogenerator}Let $\{T(t)\}_{t\geq 0}$ be a contractive semigroup and $T$ its infinitesimal cogenerator. A subspace $\ma{M}$ is invariant under $\{T(t)\}_{t\geq 0}$ if and only if it is invariant under $T$. \end{thm}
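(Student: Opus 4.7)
\emph{Proof plan.} The statement is an equivalence, so I would handle the two directions separately and somewhat asymmetrically.

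Suppose first that $\ma{M}$ is invariant under $\{T(t)\}_{t\geq 0}$. For a contractive $C_0$-semigroup the generator $A$ is dissipative, so the resolvent at $\lambda=1$ admits the Laplace representation
\[
(A-I)^{-1}x \;=\; -\int_0^\infty e^{-t}\,T(t)x\,dt,
\]
understood as a Bochner integral in $\ma{H}$. Combined with the identity $T-I = 2(A-I)^{-1}$, this gives
\[
Tx \;=\; x \;-\; 2\int_0^\infty e^{-t}\,T(t)x\,dt.
\]
If $x \in \ma{M}$, every $T(t)x$ lies in $\ma{M}$, and the closedness of $\ma{M}$ ensures that the Bochner integral, and hence $Tx$, stays in $\ma{M}$.

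For the converse, suppose $\ma{M}$ is $T$-invariant. The plan is to realize $T(t)$ as a strong limit of operators which are norm-limits of polynomials in $T$, so that invariance propagates. I would use the Yosida approximants $A_n := n^2(nI-A)^{-1} - nI$ for $n\geq 2$; each $A_n$ is bounded, and the Hille--Yosida theorem gives $e^{tA_n}x \to T(t)x$ strongly as $n\to\infty$. The algebraic core is that $T-I = 2(A-I)^{-1}$ lets one rewrite each resolvent as a rational function of $T$,
\[
(nI-A)^{-1} \;=\; f_n(T), \qquad f_n(z) \;=\; \frac{z-1}{(n+1)-(n-1)z},
\]
whose only pole $(n+1)/(n-1)$ lies strictly outside $\overline{\mathbb{D}}$. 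Thus $f_n$ is analytic on a neighbourhood of $\overline{\mathbb{D}}$ and is the uniform limit on $\overline{\mathbb{D}}$ of its Taylor polynomials $p_k$. Since the cogenerator of a contractive semigroup is itself a contraction, von Neumann's inequality yields $\|p_k(T) - f_n(T)\| \to 0$; as each $p_k(T)$ preserves the closed subspace $\ma{M}$, so does $f_n(T) = (nI-A)^{-1}$, hence so does $A_n$, and hence so does the uniformly convergent series $e^{tA_n} = \sum_k (tA_n)^k/k!$. Taking $n\to\infty$ and invoking closedness of $\ma{M}$ one more time yields $T(t)\ma{M} \subset \ma{M}$.

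The substantive step is the converse, and the likely obstacle is the passage from $T$-invariance to $(nI-A)^{-1}$-invariance: the rational identification $(nI-A)^{-1} = f_n(T)$ combined with the disk-algebra functional calculus for the contraction $T$ (supplied by von Neumann's inequality) is what promotes polynomial invariance to rational invariance. The forward direction, by contrast, is a routine Bochner-integral argument once one writes down the Laplace representation of the resolvent.
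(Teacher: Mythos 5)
The paper offers no proof of this statement: it is quoted directly from Fuhrmann's book (Theorem 10-9 of \cite{Fu}), so there is no internal argument to compare yours against. Judged on its own merits, your proof is correct and is essentially the classical one. The forward direction, via the Laplace representation $(I-A)^{-1}x=\int_0^\infty e^{-t}T(t)x\,dt$ (valid since the growth bound of a contraction semigroup is $\le 0$), the identity $T=I+2(A-I)^{-1}$, and closedness of $\mathcal{M}$ under Bochner integration, is exactly right. In the converse direction the one slip is a sign: from $T-I=2(A-I)^{-1}$ one computes $(nI-A)^{-1}=(T-I)\bigl((n-1)T-(n+1)I\bigr)^{-1}$, so $f_n(z)=\frac{z-1}{(n-1)z-(n+1)}$ rather than $\frac{z-1}{(n+1)-(n-1)z}$; this does not move the pole at $(n+1)/(n-1)$ and affects nothing downstream. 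Two further remarks. First, the assertion that the cogenerator of a contraction semigroup is a contraction deserves its one-line justification: dissipativity of $A$ gives $\|(A+I)x\|^2-\|(A-I)x\|^2=4\,\mathrm{Re}\,\langle Ax,x\rangle\le 0$ for $x\in D(A)$, whence $\|T\|\le 1$. Second, von Neumann's inequality is more machinery than you need: since $\bigl\|\tfrac{n-1}{n+1}T\bigr\|<1$ for $n\ge 2$, the operator $\bigl((n-1)T-(n+1)I\bigr)^{-1}$ is given by a norm-convergent Neumann series in powers of $T$, which already exhibits $f_n(T)$ as a norm limit of polynomials in $T$ and hence shows it preserves the closed $T$-invariant subspace $\mathcal{M}$. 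With that, your chain $f_n(T)\Rightarrow A_n\Rightarrow e^{tA_n}\Rightarrow T(t)$, using the norm-convergent exponential series and the strong convergence of the Yosida approximants, goes through as described.
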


\begin{rem} However, the parallel conclusion in Theorem \ref{thm cogenerator} does not hold for near invariance of a $C_0$-semigroup.
For example, for $\{M(t)=e^{-st}\}_{t\geq0}$ on $H^2(\mathbb{C}_+)$, $T:=(A+I)(A-I)^{-1}$ is the cogenerator of $\{M(t)\}_{t\geq0}$ with $Af:=M_{-s}f.$ Not every nearly invariant subspace in the usual sense (division by $(1-s)/(1+s)$) is nearly $\{M(t)^*\}_{t\geq 0}$ invariant, for example $e^{-s}H^2(\mathbb{C}_+)$. Likewise $((1-s)/(1+s))H^2(\mathbb{C}_+)$ is nearly $\{M(t)^*\}_{t\geq 0}$ invariant, but not nearly $T^*$ invariant.  \end{rem}

Hence it is meaningful to construct nontrivial examples of near invariance for the above well-known $C_0$-semigroups. The article is organized as follows. In Section 2, we explore a prototypical example of a smallest (cyclic) nearly $\{S(t)^*\}_{t\geq 0}$ invariant subspace in $L^2(0,\infty)$ and deduce the corresponding results for multiplication $C_0$-semigroups on Hardy spaces. The second nontrivial example is also examined in Section 3, and this leads onto a series of general  examples presented using the Hardy space model. Especially, our results reveal
that a wide class of nearly $S^*$ invariant subspaces in Hardy space $H^2(\mathbb{D})$ are of finite codimension in model spaces. The relevant characterizations in Hardy space $H^2(\mathbb{C}_+)$ are also addressed.\vspace{1mm}

In next two sections, $\ma{N}\subseteq L^2(0,\infty)$ is always supposed to be a nearly $\{S(t)^*\}_{t\geq 0}$ invariant subspace, and we denote the smallest (cyclic) nearly $\{S(t)^*\}_{t\geq 0}$ invariant subspace in $\ma{N}$ containing some nonzero vector $f$ by $[f]_{s}$. There follow two possibilities.\vspace{2mm}

(i)\; There is no function $f\in \ma{N}$, apart from the zero function, for which there exists some $\delta>0$ with $f=0$ almost everywhere on $(0,\delta)$. In this case, $\ma{N}$ is a trivial  nearly $\{S(t)^{*}\}_{t\geq 0}$ invariant subspace and $[f]_{s}=\mathbb{C}f$ for all $f \in \ma{N}$.
\vspace{1mm}

(ii)\; There are a $\delta>0$ and a function $f\in \ma{N}$ that vanishes almost everywhere on $(0,\delta)$ and not on $(0, \delta+\epsilon)$ for any $\epsilon>0.$ Since $S(\delta) S(\delta)^{*}f=f\in \ma{N}$, the  near $\{S(t)^*\}_{t\geq 0}$ invariance implies $g:=S(\delta)^{*}f\in \ma{N}$. Meanwhile, we have  $S(\lambda) g=S(\delta-\lambda)^{*} f\in \ma{N}$ for all $0\leq \lambda\leq \delta$.  So \begin{eqnarray*} [f]_{s}=\bigvee\{S(\lambda) g,\; 0\leq\lambda\leq \delta\}.\label{slambda}\end{eqnarray*}
This is the key to our work and we have not previously encountered subspaces defined in this way.

We begin with the simplest example, and explore the smallest (cyclic) nontrivial nearly $\{S(t)^*\}_{t\geq 0}$ invariant subspace in $L^2(0,\infty)$ containing $e_\delta(\zeta):=e^{-\zeta}\chi_{(\delta,\infty)}(\zeta)$ with $\delta> 0$ such that $e^{\delta}\ma{L}(e_\delta)(s)= e^{-\delta s }(1+s)^{-1}$. As an extension, we continue to take $f_{\delta,n}(\zeta):= (\zeta-\delta)^n e_\delta(\zeta)/n!$ satisfying $e^{\delta}\ma{L}(f_{\delta,n})(s)=e^{-\delta s} (1+s)^{-(n+1)}$ for integer $n\geq 1$, and formulate the Laplace transform of the smallest nearly $\{S(t)^*\}_{t\geq 0}$ invariant subspaces containing $f_{\delta,n}$ in Hardy spaces. This offers a large class of important cases for Question 1.

\section{The smallest nearly $\{S(t)^*\}_{t\geq 0}$ invariant subspace containing $e_\delta$ for $\delta>0$.}

In this section, we identify the smallest (cyclic) nearly $\{S(t)^*\}_{t\geq 0}$ invariant subspace containing $e_\delta(\zeta):=e^{-\zeta}\chi_{(\delta,\infty)}(\zeta)$
for some $\delta>0$ in $L^2(0,\infty)$. After that we express such subspaces as model spaces in Hardy spaces of the right half-plane and the unit disk.

For $\delta>0,$ let $f(\zeta)=e_\delta(\zeta)\in \ma{N}.$ For any $0\leq \lambda\leq \delta,$  $$ (S(\delta-\lambda)^{*} e_\delta)(\zeta)= e_\delta(\zeta+\delta-\lambda)
=e^{-(\delta-\lambda)}e_\lambda(\zeta),$$ and then it holds that  $$[e_\delta]_{s}:=\bigvee\{e_\lambda,\;0\leq\lambda\leq \delta\}\subseteq \ma{N}.$$

We formulate a proposition for the smallest (cyclic)  nearly $\{S(t)^{*}\}_{t\geq0}$ invariant subspace $[e_{\delta}]_{s}$ in $L^2(0,\infty).$

\begin{prop}\label{prop L2}In $L^2(0,\infty),$ the smallest nearly $\{S(t)^*\}_{t\geq 0}$ invariant subspace containing $e_\delta$ with some $\delta>0$ has the form $$[e_\delta]_{s}:=\bigvee\{e_\lambda,\;0\leq\lambda\leq \delta\}= L^2(0,\delta) +\mathbb{C}e^{-\zeta}.$$
\end{prop}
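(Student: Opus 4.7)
The plan is to first use the general reduction in case (ii) preceding the proposition to rewrite $[e_\delta]_s$ as an explicit closed linear span, and then to identify that span with $L^2(0,\delta) + \mathbb{C}e^{-\zeta}$ via two inclusions, together with a direct check of near invariance.

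First I would compute $g := S(\delta)^* e_\delta$ from the formula $(S(\delta)^* f)(\zeta) = f(\zeta+\delta)$: this gives $g(\zeta) = e^{-(\zeta+\delta)}\chi_{(0,\infty)}(\zeta) = e^{-\delta}e_0(\zeta)$, and hence $S(\lambda)g = e^{\lambda-\delta}e_\lambda$ for every $0 \leq \lambda \leq \delta$. Case (ii) therefore already yields $[e_\delta]_s = \bigvee\{e_\lambda : 0 \leq \lambda \leq \delta\}$, which is the first claimed equality. The remainder of the proof is devoted to identifying this closed span.

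For the inclusion $[e_\delta]_s \subseteq L^2(0,\delta) + \mathbb{C}e^{-\zeta}$, I would write each generator as $e_\lambda = e_0 - e^{-\zeta}\chi_{(0,\lambda]}$, with $e^{-\zeta}\chi_{(0,\lambda]} \in L^2(0,\delta)$ whenever $0 \leq \lambda \leq \delta$. Since $e^{-\zeta} \notin L^2(0,\delta)$, the sum $L^2(0,\delta) + \mathbb{C}e^{-\zeta}$ is the algebraic sum of a closed subspace and a one-dimensional complement, so it is closed and contains the closed span. For the reverse inclusion, the identity $e_\lambda - e_\mu = e^{-\zeta}\chi_{(\lambda,\mu]}$ for $0 \leq \lambda < \mu \leq \delta$ shows that $[e_\delta]_s$ contains $e^{-\zeta}\sigma$ for every step function $\sigma$ supported on $[0,\delta]$. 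Density of step functions in $L^2(0,\delta)$, combined with the fact that multiplication by $e^{-\zeta}$ is a bicontinuous self-map of $L^2(0,\delta)$ (bounded above by $1$ and below by $e^{-\delta}$), then implies that the closed span contains all of $L^2(0,\delta)$. Together with $e_0 = e^{-\zeta} \in [e_\delta]_s$ itself, this gives the reverse inclusion.

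Finally, to justify that this really is the \emph{smallest} nearly $\{S(t)^*\}_{t\geq 0}$ invariant subspace containing $e_\delta$, I would verify directly that $L^2(0,\delta) + \mathbb{C}e^{-\zeta}$ is itself nearly $\{S(t)^*\}_{t\geq 0}$ invariant. Given $t > 0$ and $x \in L^2(0,\infty)$ with $S(t)x = u + ce^{-\zeta}$ for some $u \in L^2(0,\delta)$ and $c \in \mathbb{C}$, the vanishing of $(S(t)x)(\zeta) = x(\zeta-t)\chi_{(t,\infty)}(\zeta)$ on $(0,t)$ forces $u(\zeta) = -ce^{-\zeta}$ on $(0,\min(t,\delta))$; then recovering $x(s) = (S(t)x)(s+t)$ and comparing with $u(s+t) + ce^{-(s+t)}$ shows $x = ce^{-t}e^{-\zeta} + v$, where $v$ is supported in $(0,\max(\delta-t,0)) \subseteq (0,\delta)$ and hence $x \in L^2(0,\delta) + \mathbb{C}e^{-\zeta}$. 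The main obstacle is not conceptual but notational: the support-matching must be performed uniformly across the two regimes $t \geq \delta$ (where $v = 0$) and $0 < t < \delta$, and care is required so that the resulting $v$ genuinely lies in $L^2(0,\delta)$ rather than merely being supported in $(0,\delta)$.
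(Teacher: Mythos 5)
Your proposal is correct, and its skeleton (the two inclusions via $e_\lambda = e_0 - e^{-\zeta}\chi_{(0,\lambda]}$ and $e_\lambda - e_\mu = e^{-\zeta}\chi_{(\lambda,\mu]}$) coincides with the paper's. Where you diverge is in how the reverse inclusion is closed out: the paper approximates an arbitrary $f\in C[0,\delta]$ uniformly by explicit combinations $\sum_k a_k e^{(k-1)\delta/N}\bigl(e_{(k-1)\delta/N}-e_{k\delta/N}\bigr)$, carrying an $\epsilon/2$-bookkeeping to control the discrepancy between $e^{-t+(k-1)\delta/N}$ and $1$ on each subinterval, and then invokes density of $C[0,\delta]$ in $L^2(0,\delta)$. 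You instead observe that the linear span already contains $e^{-\zeta}\sigma$ for every step function $\sigma$ on $(0,\delta]$, and that multiplication by $e^{-\zeta}$ is bounded above and below on $L^2(0,\delta)$, hence a homeomorphism carrying the dense set of step functions onto a dense subset of $L^2(0,\delta)$. This is cleaner and avoids the explicit estimates entirely. You also add two points the paper leaves implicit: that $L^2(0,\delta)+\mathbb{C}e^{-\zeta}$ is closed (closed subspace plus a one-dimensional complement), and a direct verification that this space is itself nearly $\{S(t)^*\}_{t\geq 0}$ invariant. The latter is a genuine completion of the argument: the paper's case (ii) reduction only shows that any nearly invariant subspace containing $e_\delta$ must contain $\bigvee\{e_\lambda,\;0\leq\lambda\leq\delta\}$, and your check that the identified space is nearly invariant is what certifies it as the \emph{smallest} such subspace. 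Your case analysis there (forcing $c=0$ when $t>\delta$, and $v(s)=u(s+t)\chi_{(0,\delta-t)}(s)\in L^2(0,\delta)$ when $t<\delta$) is sound.
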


\begin{proof} To simplify the writing, we denote $N:=L^2(0,\delta) +\mathbb{C}e^{-\zeta}.$ Since $e_\lambda(\zeta)=-e^{-\zeta}\chi_{(0, \lambda]}+e^{-\zeta}\in N$ for every $0\leq \lambda\leq \delta$, so $[e_\delta]_{s}\subseteq N$.\vspace{1mm}

Conversely,  for $0\leq \lambda<\mu\leq \delta$ we have $$(e_\lambda-e_\mu)(\zeta)=e^{-\zeta}\chi_{(\lambda,\mu]}
(\zeta),$$ and next we will show that the closed linear span of  the $e_\lambda-e_\mu$ is $L^2(0,\delta).$\vspace{1mm}

Taking a function $f\in C[0,\delta],$ we will approximate $f$ arbitrarily closely in $L^\infty(0,\delta)$  by combinations of $e_{\lambda}-e_{\mu}$. Since any function $f$ can be written
as $u+iv$ with two real functions $u$ and $v$, we may suppose without loss of generality  that $f$ is real and $\|f\|_{L^\infty(0,\delta)}\leq 1$. Given $\epsilon>0$, we use uniform
continuity of $f$ to partition $[0, \delta)$ into $N$ intervals $I_k = [(k -1)\delta/N, k\delta/N)$ of length $\delta/N$ such that
\begin{eqnarray*}
\sup\limits_{I_k}f-\inf\limits_{I_k}f <\frac{\epsilon}{2}\quad \mbox{for each}\; k=1,\cdots, N.\label{Ik}\end{eqnarray*} We also choose $N$ large enough such that
\begin{eqnarray}
1-e^{-\delta/N} <\frac{\epsilon}{2}.\label{1e}
\end{eqnarray}
Then
 \begin{eqnarray}\|f-\sum_{k=1}^N a_k\chi_{I_k}\|_{L^\infty(0,\delta)}<\frac{\epsilon}{2}\label{fak}\end{eqnarray} for some suitable $a_k\in[-1, 1]$  and  that
\begin{eqnarray}&&|a_k- a_k e^{-t+(k-1)\delta/N} |
\nonumber\\&\leq&|a_k|(1-e^{-\delta/N})\nonumber
\\&\leq& 1-e^{-\delta/N}<\frac{\epsilon}{2}\quad \mbox{for}\;\;t\in I_k,\label{ake}\end{eqnarray} due to $-\delta/N<-t+(k-1)\delta/N\leq 0$ and \eqref{1e}. Then \eqref{fak} together with \eqref{ake}
give
 \begin{eqnarray*}
&&\| f-\sum_{k=1}^N a_ke^{(k-1)\delta/N}(e_{(k-1)\delta/N}-e_{k\delta/N})\|_{L^\infty(0,\delta)}
\\&&\leq \| f-\sum_{k=1}^N a_k\chi_{I_k}\|_{L^\infty(0,\delta)}+\|\sum_{k=1}^N a_k\chi_{I_k}-\sum_{k=1}^N a_ke^{-t+(k-1)\delta/N} \chi_{I_k}\|_{L^\infty(0,\delta)}
\\&&\leq \frac{\epsilon}{2}+\max\limits_{1\leq k\leq N} |a_k- a_ke^{-t+(k-1)\delta/N} | \\&&\leq \frac{\epsilon}{2}+\frac{\epsilon}{2}=\epsilon.\end{eqnarray*}
Since $C[0, \delta]$ is dense in $L^2(0,\delta)$ and $\|f\|_{L^2(0,\delta)}\leq \sqrt{\delta} \|f\|_{L^{\infty}(0,\delta)}$,  the desired result follows. \end{proof}

Using the transform $\ma{L}: L^2(0,\infty) \rightarrow H^2(\mathbb{C}_+)$ in \eqref{Lap1}, we have
\begin{eqnarray}e^{\delta}\ma{L}(e_\delta)(s)=e^\delta \int_\delta^\infty e^{-(s+1)t}dt=\frac{e^{-\delta s }}{1+s}.\label{eE}\end{eqnarray}
Then the equation in Proposition \ref{prop L2} is mapped by $\mathcal{L}$ into
\begin{equation}\bigvee\{\frac{e^{-\lambda s}}{1+s},\; 0\leq \lambda\leq \delta\}=K_{e^{-\delta s}}+ \mathbb{C} \frac{1}{1+s}.\label{Dlambda}\end{equation}
Using the map $V^{-1}:\; H^2(\mathbb{C}_+)\rightarrow H^2(\mathbb{D})$  in \eqref{V-map}, it yields that
\begin{eqnarray} V^{-1}:\; e^{-\delta s}\rightarrow \frac{2\sqrt{\pi}}{1+z}\phi^\delta(z)\quad \mbox{and}\quad V^{-1}:\; \frac{1-s}{1+s}e^{-\delta s} \rightarrow\frac{2\sqrt{\pi}}{1+z}z\phi^\delta(z).\label{Fdelta}\;\;\end{eqnarray}
Since $(1+z)^{-1}$ is an outer function in $H^2(\mathbb{D})$, \eqref{Fdelta} further implies the corresponding model spaces from $H^2(\mathbb{C}_+)$ to $H^2(\mathbb{D})$:
\begin{eqnarray*} K_{e^{-\delta s}}\rightarrow K_{\phi^\delta} \quad \mbox{and}\quad
K_{\frac{1-s}{1+s}e^{-\delta s}}\rightarrow K_{z\phi^\delta}.\end{eqnarray*}

 Next we recall a lemma for model spaces from \cite{GMR}.
\begin{lem}\cite[Corollary 5.9]{GMR}\label{lem bee} If $\theta_1$ and $\theta_2$ are inner functions on $\mathbb{D}$, then
$$K_{\theta_1}\bigvee K_{\theta_2}=K_{\lcm(\theta_1, \theta_2)}$$ where $\lcm(\theta_1, \theta_2)$  is the least common multiple of $\theta_1$ and $\theta_2.$\end{lem}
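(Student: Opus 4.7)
The plan is to reduce the assertion to a statement about the orthogonal complements, where Beurling's theorem applies directly. Recall that $K_\theta = H^2(\DD) \ominus \theta H^2(\DD)$, so the annihilator of $K_\theta$ inside $H^2(\DD)$ is precisely $\theta H^2(\DD)$.

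First I would observe the easy inclusion: since $\theta_1$ and $\theta_2$ both divide $\theta := \lcm(\theta_1,\theta_2)$, each quotient $\theta/\theta_i$ is inner, and hence $\theta H^2(\DD) \subseteq \theta_i H^2(\DD)$, which upon taking orthogonal complements gives $K_{\theta_i}\subseteq K_{\theta}$ for $i=1,2$. Taking the closed linear span then yields $K_{\theta_1}\bigvee K_{\theta_2} \subseteq K_{\theta}$.

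For the reverse inclusion I would pass to orthogonal complements in $H^2(\DD)$ and use the identity $(\ma{A}\bigvee \ma{B})^\perp = \ma{A}^\perp \cap \ma{B}^\perp$ for closed subspaces. This gives
\[
\bigl(K_{\theta_1}\bigvee K_{\theta_2}\bigr)^{\perp} \;=\; K_{\theta_1}^\perp \cap K_{\theta_2}^\perp \;=\; \theta_1 H^2(\DD) \cap \theta_2 H^2(\DD).
\]
The right-hand side is a closed $S$-invariant subspace of $H^2(\DD)$, hence by Beurling's theorem it equals $\psi H^2(\DD)$ for some inner function $\psi$. Since $\psi H^2(\DD) \subseteq \theta_i H^2(\DD)$, $\psi$ is divisible by each $\theta_i$, and minimality of the intersection among such subspaces forces $\psi$ to be (up to a unimodular constant) the least common multiple of $\theta_1$ and $\theta_2$, i.e.\ $\psi = \theta$. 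Taking orthogonal complements one more time gives $K_{\theta_1}\bigvee K_{\theta_2} = K_\theta$.

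The only nontrivial point is the identification $\theta_1 H^2(\DD)\cap \theta_2 H^2(\DD) = \lcm(\theta_1,\theta_2)H^2(\DD)$; this is essentially the definition of the LCM through the lattice isomorphism (given by Beurling's theorem) between inner functions (modulo unimodular constants, ordered by divisibility) and shift-invariant subspaces of $H^2(\DD)$ (ordered by reverse inclusion), under which the lattice operation $\lcm$ corresponds to intersection of subspaces. I would cite this correspondence rather than reprove it.
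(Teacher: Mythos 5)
Your proof is correct. Note that the paper does not prove this lemma at all --- it is quoted verbatim from \cite[Corollary 5.9]{GMR} --- so there is no internal argument to compare against; your route (pass to orthogonal complements, use $(K_{\theta_1}\bigvee K_{\theta_2})^\perp=\theta_1H^2(\DD)\cap\theta_2H^2(\DD)$, apply Beurling's theorem, and identify the resulting inner function with $\lcm(\theta_1,\theta_2)$ via the lattice anti-isomorphism) is the standard proof and is essentially how the cited source establishes it. The only point worth making explicit is that $\theta_1H^2(\DD)\cap\theta_2H^2(\DD)\neq\{0\}$ (it contains $\theta_1\theta_2H^2(\DD)$), so Beurling's theorem does produce an inner function; with that remark the argument is complete.
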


In Lemma \ref{lem bee}, if one of the left-hand subspaces is finite-dimensional, then the closed linear span is same as the sum. So it yields that \begin{eqnarray} K_{z\phi^\delta}=K_z+ K_{\phi^\delta}=\mathbb{C}+ K_{\phi^\delta}.\label{zphi}\end{eqnarray}
 Switching \eqref{zphi} into $H^2(\mathbb{C}_+)$ by the map $\eqref{Vmap}$,  we deduce \begin{eqnarray}K_{
\frac{1-s}{1+s}e^{-\delta s}}= \mathbb{C} \frac{1}{1+s}+ K_{e^{-\delta s}}.\label{decom1}\end{eqnarray}

Based on \eqref{Dlambda} and \eqref{decom1}, we obtain a corollary in $H^2(\mathbb{C}_+)$.
\begin{cor}\label{cor half}In $H^2(\mathbb{C}_+),$ the Laplace transform of $[e_\delta]_{s}$ is $$\ma{L}([e_\delta]_{s})=\bigvee\{\frac{e^{-\lambda s}}{1+s},\; 0 \leq \lambda\leq \delta\}=  K_{e^{-\delta s}} + \mathbb{C}\frac{1}{1+s}=K_{\frac{1-s}{1+s}e^{-\delta s}},$$  where $K_{e^{-\delta s}}$ and $K_{\frac{1-s}{1+s}e^{-\delta s}}$ are model spaces in $H^2(\mathbb{C}_+)$.
\end{cor}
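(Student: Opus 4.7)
The plan is to transport Proposition \ref{prop L2} across the Laplace transform and then identify the resulting subspaces with the model spaces $K_{e^{-\delta s}}$ and $K_{\frac{1-s}{1+s}e^{-\delta s}}$ using the ingredients the paper has already assembled: formula \eqref{eE}, the lcm-lemma (Lemma \ref{lem bee}), and the derived identities \eqref{zphi}--\eqref{decom1}.

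First I would establish the leftmost equality. By Paley–Wiener, $\ma{L}$ is an isometric isomorphism (up to the constant $\sqrt{2\pi}$) from $L^2(0,\infty)$ onto $H^2(\mathbb{C}_+)$, so it commutes with closed linear spans. Applying $\ma{L}$ to $[e_\delta]_s=\bigvee\{e_\lambda:0\leq\lambda\leq\delta\}$ from Proposition \ref{prop L2} and using \eqref{eE}, which yields $\ma{L}(e_\lambda)(s)=e^{-\lambda}e^{-\lambda s}/(1+s)$, the scalar factors $e^{-\lambda}$ are absorbed into the closed linear span, giving
\[
\ma{L}([e_\delta]_s)=\bigvee\Bigl\{\frac{e^{-\lambda s}}{1+s}:0\leq \lambda\leq \delta\Bigr\}.
\]
For the middle equality I would instead apply $\ma{L}$ to the other description $[e_\delta]_s=L^2(0,\delta)+\mathbb{C}e^{-\zeta}$ from Proposition \ref{prop L2}. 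The computation $\ma{L}(e^{-\zeta})(s)=1/(1+s)$ is immediate, and the key identification is $\ma{L}(L^2(0,\delta))=K_{e^{-\delta s}}$, which follows because $\ma{L}$ intertwines $S(\delta)$ with multiplication by $e^{-\delta s}$, hence sends $L^2(\delta,\infty)=S(\delta)L^2(0,\infty)$ isometrically onto $e^{-\delta s}H^2(\mathbb{C}_+)$, and therefore sends its orthogonal complement $L^2(0,\delta)$ onto $H^2(\mathbb{C}_+)\ominus e^{-\delta s}H^2(\mathbb{C}_+)=K_{e^{-\delta s}}$. Combining these gives the middle expression.

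Finally, the rightmost equality $K_{e^{-\delta s}}+\mathbb{C}\frac{1}{1+s}=K_{\frac{1-s}{1+s}e^{-\delta s}}$ is precisely \eqref{decom1}, which was already deduced from Lemma \ref{lem bee} via the unit-disk picture \eqref{zphi} and transported back through $V$; I would simply invoke it. Because $\mathbb{C}/(1+s)$ is one-dimensional, the closed linear span reduces to an algebraic sum, so no extra closure argument is needed here. The only nontrivial step is the Paley–Wiener identification $\ma{L}(L^2(0,\delta))=K_{e^{-\delta s}}$; all the remaining content consists of bookkeeping with formulas already in place, so I expect no serious obstacle beyond cleanly citing the earlier equations.
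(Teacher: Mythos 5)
Your proposal is correct and follows essentially the same route as the paper: apply $\ma{L}$ to both descriptions of $[e_\delta]_s$ from Proposition \ref{prop L2} to get the first two equalities, then invoke \eqref{decom1} (itself obtained from Lemma \ref{lem bee} via the disk) for the last. The only difference is that you spell out the intertwining argument showing $\ma{L}(L^2(0,\delta))=K_{e^{-\delta s}}$, which the paper asserts implicitly when passing from Proposition \ref{prop L2} to \eqref{Dlambda}; that step is correct and worth making explicit.
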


Transferring Corollary \ref{cor half} into $H^2(\mathbb{D})$ by $$ V^{-1}:\;\frac{e^{-\lambda s}}{1+s}\rightarrow \sqrt{\pi} \phi^\lambda(z),$$ and using \eqref{zphi}, we deduce a corollary in $H^2(\mathbb{D})$.
\begin{cor}\label{cor phidelta}In $H^2(\mathbb{D}),$ it holds that $$V^{-1}(\ma{L}([e_\delta]_{s}))=\bigvee\{\phi^\lambda,\; 0\leq \lambda \leq \delta\}=K_{z \phi^{\delta}}.$$\end{cor}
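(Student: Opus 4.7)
The plan is to obtain Corollary \ref{cor phidelta} as a direct transplantation of Corollary \ref{cor half} under the isometric isomorphism $V^{-1}: H^2(\mathbb{C}_+)\to H^2(\mathbb{D})$ given in \eqref{V-map}. The first step is the explicit computation that $V^{-1}$ sends each generator $\frac{e^{-\lambda s}}{1+s}$ to a scalar multiple of $\phi^\lambda$. Using $(V^{-1}g)(z)=\frac{2\sqrt{\pi}}{1+z}g(M(z))$ together with the elementary identity
\[
1+M(z)=1+\frac{1-z}{1+z}=\frac{2}{1+z},
\]
the factor $\frac{1}{1+M(z)}=\frac{1+z}{2}$ cancels the outer factor $\frac{2\sqrt{\pi}}{1+z}$, leaving
\[
V^{-1}\!\left(\frac{e^{-\lambda s}}{1+s}\right)(z)=\sqrt{\pi}\,e^{-\lambda M(z)}=\sqrt{\pi}\,\phi^\lambda(z).
\]
Since $V^{-1}$ is an isometric isomorphism it commutes with closed linear spans, so Corollary \ref{cor half} transfers to
\[
V^{-1}(\ma{L}([e_\delta]_s))=\bigvee\{\phi^\lambda,\;0\leq\lambda\leq\delta\}.
\]

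For the second equality, I would invoke the model-space correspondence already highlighted just before Corollary \ref{cor half}, namely that $V^{-1}$ carries $K_{\frac{1-s}{1+s}e^{-\delta s}}$ onto $K_{z\phi^\delta}$. This is precisely the content of \eqref{Fdelta} combined with the fact that $(1+z)^{-1}$ is outer, so that the outer factor of $V^{-1}$ absorbs into the multiplier and the inner functions $\frac{1-s}{1+s}e^{-\delta s}$ and $z\phi^\delta$ are exchanged. Applying this to the third expression in Corollary \ref{cor half} yields $V^{-1}(K_{\frac{1-s}{1+s}e^{-\delta s}})=K_{z\phi^\delta}$, completing the chain of equalities.

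There is essentially no hard step here; the work has been done in Section 2 before the statement. The only points to be careful about are bookkeeping of the $\sqrt{\pi}$ normalization (which does not affect closed linear spans), and confirming that the model-space correspondence alluded to in the paragraph after \eqref{Fdelta} is being applied to the correct inner function $\frac{1-s}{1+s}e^{-\delta s}$ rather than $e^{-\delta s}$ alone. As a sanity check, one can independently verify the identification via Lemma \ref{lem bee}: the decomposition $K_{z\phi^\delta}=K_z+K_{\phi^\delta}=\mathbb{C}+K_{\phi^\delta}$ from \eqref{zphi} matches the image under $V^{-1}$ of the right-hand side $K_{e^{-\delta s}}+\mathbb{C}\frac{1}{1+s}$ in Corollary \ref{cor half}, since $V^{-1}(K_{e^{-\delta s}})=K_{\phi^\delta}$ and $V^{-1}(\mathbb{C}\frac{1}{1+s})=\mathbb{C}\phi^{0}=\mathbb{C}$ by the $\lambda=0$ case of the computation above.
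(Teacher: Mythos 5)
Your proposal is correct and follows essentially the same route as the paper, which obtains the corollary by transferring Corollary \ref{cor half} under $V^{-1}$ via the computation $V^{-1}\colon \frac{e^{-\lambda s}}{1+s}\mapsto \sqrt{\pi}\,\phi^\lambda$ and then invoking \eqref{zphi} (equivalently Lemma \ref{lem bee}) to identify the span with $K_{z\phi^\delta}$. Your extra bookkeeping of the cancellation $1+M(z)=\frac{2}{1+z}$ and the cross-check $K_{z\phi^\delta}=\mathbb{C}+K_{\phi^\delta}$ only make explicit what the paper leaves implicit.
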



The following corollary further shows that the closed linear span of the powers of the singular inner function $\exp((z-1)/(z+1))$ is $H^2(\mathbb{D}).$

\begin{cor} \label{Cor HD}In $H^2(\mathbb{D}),$ it holds that
\begin{eqnarray} \bigvee\{\phi^\lambda,\;0\leq \lambda <\infty\}=H^2(\mathbb{D}).\label{sing1}\end{eqnarray} \end{cor}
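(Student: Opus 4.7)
The strategy is to transfer the statement back to $L^2(0,\infty)$ via the isomorphism $V^{-1}\circ \ma{L}$, where everything becomes transparent. By Corollary~\ref{cor phidelta}, for each $\delta>0$ we already have
\begin{equation*}
V^{-1}\bigl(\ma{L}([e_\delta]_{s})\bigr)=\bigvee\{\phi^\lambda:0\le \lambda\le \delta\}=K_{z\phi^\delta}.
\end{equation*}
Since both $\ma{L}:L^2(0,\infty)\to H^2(\mathbb{C}_+)$ and $V^{-1}:H^2(\mathbb{C}_+)\to H^2(\mathbb{D})$ are bounded linear bijections (isometric up to a constant), they commute with taking closed linear spans. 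Thus it suffices to establish
\begin{equation*}
\bigvee_{\delta>0}[e_\delta]_{s}=L^2(0,\infty),
\end{equation*}
and then apply $V^{-1}\circ\ma{L}$ to both sides, recognizing that $\bigvee_{\delta>0}\{\phi^\lambda:0\le\lambda\le\delta\}=\bigvee\{\phi^\lambda:0\le\lambda<\infty\}$.

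The second step is then immediate from the $L^2$ side. By Proposition~\ref{prop L2}, for each $\delta>0$ one has $[e_\delta]_{s}=L^2(0,\delta)+\mathbb{C}e^{-\zeta}$, where $L^2(0,\delta)$ is viewed as the subspace of functions in $L^2(0,\infty)$ supported in $(0,\delta)$. The union $\bigcup_{\delta>0}L^2(0,\delta)$ is precisely the set of compactly supported elements of $L^2(0,\infty)$, a classical dense subset. Hence $\bigvee_{\delta>0}L^2(0,\delta)=L^2(0,\infty)$, and a fortiori $\bigvee_{\delta>0}[e_\delta]_{s}=L^2(0,\infty)$.

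There is no real obstacle here; this is a short corollary harvesting the work already done. The only minor point that deserves a sentence of care is the commutativity of closed linear spans with the bounded bijection $V^{-1}\circ\ma{L}$, which is a standard consequence of continuity together with injectivity of the inverse. The statement can then be read as saying that the family of model spaces $K_{z\phi^\delta}$ exhausts $H^2(\mathbb{D})$ as $\delta\to\infty$, matching the intuition that the singular masses $\delta\cdot\delta_{-1}$ at $-1$ become unbounded and so no nonzero $H^2$ function can lie in every $z\phi^\delta H^2(\mathbb{D})$.
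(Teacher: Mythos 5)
Your argument is correct, but it reaches the conclusion by a genuinely different route from the paper. The paper works directly in $H^2(\mathbb{D})$: if $f$ is orthogonal to $\bigvee\{\phi^\lambda,\;0\leq\lambda<\infty\}$, then by Corollary~\ref{cor phidelta} it is orthogonal to every $K_{z\phi^\delta}$, hence lies in $\bigcap_{\delta>0}z\phi^\delta H^2(\mathbb{D})$, and this forces $f=0$ since the inner factor of a nonzero function cannot carry a singular mass at $-1$ exceeding every $\delta$ --- precisely the intuition you relegate to your closing sentence, which the paper promotes to the actual proof. You instead pull the statement back to $L^2(0,\infty)$ through the isomorphism $V^{-1}\circ\ma{L}$, use Proposition~\ref{prop L2} to see that $L^2(0,\delta)\subseteq[e_\delta]_{s}$, and reduce everything to the density of functions of bounded support; the step about closed linear spans commuting with a bounded bijection is indeed standard and unproblematic. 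Each approach has its merits: yours is more elementary at the final step, needing no inner--outer factorization, but it leans on the explicit computation of $[e_\delta]_{s}$ in Proposition~\ref{prop L2}; the paper's orthogonality argument stays intrinsic to the disc and generalizes at once to any family of inner functions admitting no common inner multiple. One cosmetic point: $\bigcup_{\delta>0}L^2(0,\delta)$ is the set of elements whose essential support is bounded above rather than literally all compactly supported elements, but this set is dense in $L^2(0,\infty)$ all the same, so nothing is lost.
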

\begin{proof} Denote $A:=\bigvee\{\phi^\lambda,\;0\leq \lambda<\infty\}.$ Suppose there is  a function $f\perp A$, then Corollary \ref{cor phidelta} implies it is orthogonal to every $$K_{z\phi^\delta}=\bigvee \{\phi^\lambda,\;0\leq \lambda \leq \delta\}.$$ This means $f$ is in the intersection of $ z\phi^\delta H^2(\mathbb{D})$ for every $\delta> 0,$ then $f=0$ by the uniqueness of inner-outer factorization.  \end{proof}
Using the isomorphism in \eqref{Vmap}, we have a corollary in $H^2(\mathbb{C}_+),$ which can also be deduced from the fact $1/(1+s)$ is an outer function.
\begin{cor}\label{cor 1+s} In $H^2(\mathbb{C}_+),$ it holds that
\begin{eqnarray*} \bigvee\{\frac{e^{-\lambda s}}{1+s},\;0\leq \lambda<\infty\}=H^2(\mathbb{C}_+).\label{wholeC}\end{eqnarray*} \end{cor}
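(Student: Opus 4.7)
The plan is to transfer the statement through the isometric isomorphism $V^{-1}:H^2(\mathbb{C}_+)\to H^2(\mathbb{D})$ from \eqref{V-map} and then invoke Corollary \ref{Cor HD}. The key identification is that each generator $e^{-\lambda s}/(1+s)$ is sent by $V^{-1}$ to a scalar multiple of $\phi^\lambda$. Indeed, with $M(z)=(1-z)/(1+z)$ one has $1+M(z)=2/(1+z)$ and $e^{-\lambda M(z)}=\phi^\lambda(z)$, so \eqref{V-map} gives
\[
V^{-1}\!\left(\frac{e^{-\lambda s}}{1+s}\right)(z)
=\frac{2\sqrt{\pi}}{1+z}\cdot\frac{\phi^\lambda(z)}{1+M(z)}
=\sqrt{\pi}\,\phi^\lambda(z),
\]
which is precisely the identification already employed in passing from Corollary \ref{cor half} to Corollary \ref{cor phidelta}.

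Since $V^{-1}$ is an isometric isomorphism it carries closed linear spans onto closed linear spans, so
\[
V^{-1}\!\left(\bigvee\left\{\frac{e^{-\lambda s}}{1+s}:0\le\lambda<\infty\right\}\right)
=\bigvee\{\sqrt{\pi}\,\phi^\lambda:0\le\lambda<\infty\}=H^2(\mathbb{D}),
\]
where the last equality is Corollary \ref{Cor HD}. Applying $V$ to both sides yields the desired equality with $H^2(\mathbb{C}_+)$.

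I do not expect any genuine obstacle: Corollary \ref{Cor HD} has already done the substantive work on the disk side, and what remains is a routine change of variables together with the unitary transport of closed linear spans under $V$. As a sanity check one could also argue directly on $\mathbb{C}_+$, since $1/(1+s)$ is outer and so cyclic in $H^2(\mathbb{C}_+)$ for the multiplication semigroup $\{M(t)\}_{t\geq 0}$; any $g\in H^2(\mathbb{C}_+)$ orthogonal to every $e^{-\lambda s}/(1+s)$ would then be forced to vanish. However, the transport route through $V^{-1}$ is both shorter and uses only results already established in the paper.
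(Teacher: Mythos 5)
Your proposal is correct and follows essentially the same route as the paper, which likewise obtains the corollary by transporting Corollary \ref{Cor HD} through the isomorphism \eqref{Vmap} (and notes, as you do, the alternative argument that $1/(1+s)$ is outer). The computation $V^{-1}(e^{-\lambda s}/(1+s))=\sqrt{\pi}\,\phi^\lambda$ is exactly the identification the paper uses.
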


Next, we require a lemma in  $H^2(\mathbb{C}_+)$.
\begin{lem} \label{lem gs}If $g(s)$, $sg(s)\in H^2(\mathbb{C}_+)$, then \begin{eqnarray*}sg(s)e^{-st}\in \bigvee\{g(s)e^{-\lambda s},\;|\lambda -t|<\epsilon\}\label{sgs}\end{eqnarray*} for all $\epsilon>0.$  \end{lem}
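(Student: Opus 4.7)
The plan is to recognise that, at a formal level,
$$sg(s)e^{-ts}=-\frac{d}{d\lambda}\bigl[g(s)e^{-\lambda s}\bigr]\Big|_{\lambda=t},$$
so the statement is equivalent to saying this derivative exists as an honest $H^2(\mathbb{C}_+)$-norm limit of difference quotients. Once I know that, then for any prescribed $\epsilon>0$ I can take $0<|h|<\epsilon$ and approximate $sg(s)e^{-ts}$ by $\tfrac{1}{h}\bigl(g(s)e^{-ts}-g(s)e^{-(t+h)s}\bigr)$, which is a linear combination of two elements of $\{g(s)e^{-\lambda s}:|\lambda-t|<\epsilon\}$ and therefore lies in the closed linear span.

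To verify the norm-differentiability I would pass to boundary values on $i\mathbb{R}$. Writing $G(y):=g(iy)\in L^2(\mathbb{R})$ (so that $\|h\|_{H^2(\mathbb{C}_+)}^2$ is, up to the Paley-Wiener constant, $\int|h(iy)|^2\,dy$), the squared norm of the error
$$\frac{g(s)e^{-(t+h)s}-g(s)e^{-ts}}{h}+sg(s)e^{-ts}$$
equals, up to a constant,
$$\int_{-\infty}^{\infty}|G(y)|^{2}\left|\frac{e^{-ihy}-1}{h}+iy\right|^{2}dy.$$
The elementary inequalities $|e^{-ihy}-1+ihy|\leq (hy)^{2}/2$ and $|e^{-ihy}-1|\leq|hy|$ give, on the one hand, pointwise convergence of the integrand to $0$ as $h\to 0$, and on the other hand the uniform domination
$$\left|\frac{e^{-ihy}-1}{h}+iy\right|^{2}|G(y)|^{2}\leq 4y^{2}|G(y)|^{2}.$$
The crux, and the only place where the hypothesis is used, is that $4y^{2}|G(y)|^{2}$ is integrable: this is exactly the condition that $yG(y)\in L^{2}(\mathbb{R})$, which is forced by the assumption $sg(s)\in H^{2}(\mathbb{C}_+)$ because the boundary values of $sg$ are $iyG(y)$. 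The main obstacle of the argument is therefore seeing \emph{why} the hypothesis is the right one: without $sg\in H^{2}$ there would be no $L^{2}$ majorant, and the dominated-convergence step would collapse.

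Applying the Lebesgue dominated convergence theorem gives the desired norm limit, and the inclusion stated in the lemma follows at once.
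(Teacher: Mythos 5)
Your proposal is correct and follows essentially the same route as the paper's proof: both realise $-sg(s)e^{-st}$ as the $H^2(\mathbb{C}_+)$-norm limit of difference quotients of $g(s)e^{-\lambda s}$, pass to boundary values on $i\mathbb{R}$, and use dominated convergence with a majorant of the form $C|sg(s)|^2\in L^1(i\mathbb{R})$ supplied by the hypothesis $sg\in H^2(\mathbb{C}_+)$. The only cosmetic difference is that you bound the difference quotient via the elementary inequality $|e^{-ihy}-1|\le |hy|$ where the paper invokes the mean value inequality; the content is identical.
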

\begin{proof} Taking $s\in i\mathbb{R}$ and then differentiating with respect to the parameter $t$, we have that
\begin{align*} \lim\limits_{\mu\rightarrow 0} \frac{g(s)(e^{-s(t+\mu)}-e^{-st})}{\mu}=-sg(s)e^{-st}.\end{align*}
Moreover, by the mean value inequality, we deduce that
\begin{align*} \left| \frac{g(s)(e^{-s(t+\mu)}-e^{-st})}{\mu}\right|\leq \sup\limits_{|\lambda -t|<\mu}|-sg(s)e^{-\lambda s}|=|sg(s)|,\end{align*}
so that the above convergence is not only pointwise, but, using dominated convergence and the assumption that $|sg(s)|\in L^2(i\mathbb{R}),$ takes place in $H^2(\mathbb{C}_+)$ norm. Thus
the desired result follows.
\end{proof}

\begin{rem}$(1)$ For the case $g(s)=1/(1+s)^2,$ we conclude that
\begin{align*}\frac{se^{-st}}{(1+s)^2}\in B:=\bigvee\{\frac{e^{-\lambda s}}{(1+s)^2},\; 0\leq \lambda \leq \delta\}\;\;\mbox{for all}\;t\in[0,\delta]. \end{align*} Here we use one-sided limits for $t=0$ and $t=\delta$ in the Lemma \ref{lem gs}. By linearity it follows $e^{-st}/(1+s)\in B$ and further implies the inclusion \begin{align*}\bigvee\{\frac{e^{-\lambda s}}{1+s},\;0\leq \lambda\leq \delta\}\subseteq \bigvee\{\frac{e^{-\lambda s}}{(1+s)^2},\;0\leq \lambda\leq \delta\}. \end{align*} Transferring to the unit disc by the map \eqref{V-map}, it follows that
\begin{eqnarray}\bigvee\{\phi^\lambda,\;0\leq \lambda\leq \delta\}\subseteq \bigvee\{(1+z)\phi^\lambda,\;0\leq \lambda\leq \delta\}.\label{deltaD} \end{eqnarray}

$(2)$ For the general $g_n(s)=1/(1+s)^{n+1}\in H^2(\mathbb{C}_+)$ with integer $n\geq 1,$ it similarly holds on $H^2(\mathbb{C}_+)$ and $H^2(\mathbb{D})$ as below
\begin{eqnarray}&&\bigvee\{\frac{e^{-\lambda s}}{(1+s)^{n}},\;0\leq \lambda\leq \delta\}\subseteq \bigvee\{\frac{e^{-\lambda s}}{(1+s)^{n+1}},\;0\leq \lambda\leq \delta \}, \nonumber
\\&& \bigvee\{(1+z)^{n-1}\phi^\lambda,\;0\leq \lambda\leq \delta\}\subseteq \bigvee\{(1+z)^n\phi^\lambda,\;0\leq \lambda\leq \delta\}.\quad \quad\quad\label{nD} \end{eqnarray}
\end{rem}

\section{The smallest nearly $\{S(t)^*\}_{t\geq0}$ invariant subspaces in more general situations}

Recall that in Section 2, the function $e_\delta$ satisfies \eqref{eE}. Next, it is natural to look at the smallest (cyclic) nearly $\{S(t)^*\}_{t\geq 0}$ invariant subspace in $L^2(0,\infty)$ containing    $f=f_{\delta,1}(\zeta):=(\zeta-\delta) e_\delta(\zeta)$ such that $e^{\delta} \ma{L}(f_{\delta,1})(s)= e^{-\delta s} (1+s)^{-2}$ for some $\delta>0$. In this case, we
shall show that the mapped subspace in $H^2(\mathbb{D})$ is the closure of $(1+z)K_{z\phi^\delta}$ equalling the model space $K_{z^2\phi^\delta}.$ Afterwards, we describe the general formulas for the Laplace transform of the smallest (cyclic) nearly $\{S(t)^*\}_{t\geq 0}$ invariant subspace containing $f_{\delta,n}$ in $H^2(\mathbb{C}_+)$ and corresponding subspaces in $H^2(\mathbb{D})$.  This leads us to find some important characterizations for the closure of $gK_{z\phi^\delta}$ with a more general $g\in L^\infty(\mathbb{T})$. Meanwhile, we also summarize the descriptions in $H^2(\mathbb{C}_+)$.
\subsection{The smallest (cyclic) nearly $\{S(t)^*\}_{t\geq0}$ invariant subspace containing $f_{\delta,1}$ for some $\delta>0$}For the vector $f_{\delta,1}(\zeta):=(\zeta-\delta)e_\delta$, it holds that
\begin{eqnarray*} e^{\delta}\ma{L}(f_{\delta,1})(s)=\frac{e^{-\delta s}}{(1+s)^2}.\label{Lfdelta}\end{eqnarray*}

Suppose the  nearly $\{S(t)^*\}_{t\geq0}$ invariant subspace $\ma{N}$  contains $f_{\delta,1}$ with some $\delta>0.$  Since
\begin{eqnarray*}S(\delta-\lambda)^* f_{\delta,1}=e^{-(\delta-\lambda)}(\zeta-\lambda)e_\lambda\in \ma{N}\;\;\mbox{for all}\;0\leq \lambda \leq \delta,\end{eqnarray*}  the smallest (cyclic) nearly $\{S(t)^*\}_{t\geq0}$ invariant subspace containing the vector $f_{\delta,1}$ in $\ma{N}$ is
\begin{align*} [f_{\delta,1}]_{s}=\bigvee\{(\zeta-\lambda)e_\lambda,\;0\leq \lambda\leq \delta \}. \end{align*} In $H^2(\mathbb{C}_+),$ the Laplace transform maps the subspace $[f_{\delta,1}]_{s}$  onto \begin{align*}\ma{L}([f_{\delta,1}]_{s})=\bigvee\{\frac{e^{-\lambda s}}{(1+s)^2},\;0\leq \lambda\leq \delta\}.\label{co1}\end{align*}  Meanwhile, in $H^2(\mathbb{D})$, by the map $V^{-1}$ in \eqref{V-map}, we have \begin{eqnarray}
 V^{-1}(\ma{L}([f_{\delta,1}]_{s}))
&=&\bigvee\{(1+z)\phi^\lambda,\;0\leq \lambda\leq \delta\}\nonumber\\&=&\overline{(1+z)K_{z\phi^\delta}}
\subseteq \overline{K_{z\phi^\delta}+zK_{z\phi^\delta}}.
\label{D1}\end{eqnarray}
By the formula \eqref{deltaD}, it follows that $$\phi^\lambda, z\phi^\lambda \in \overline{(1+z)K_{z\phi^\delta}}$$ for $0\leq \lambda \leq \delta$. This further implies $$\overline{K_{z\phi^\delta}+zK_{z\phi^\delta}}\subseteq \overline{(1+z)K_{z\phi^\delta}},$$ which together with \eqref{D1} imply \begin{equation}V^{-1}(\ma{L}([f_{\delta,1}]_{s}))
=\overline{K_{z\phi^\delta}+zK_{z\phi^\delta}}.\label{fdeltaD} \end{equation}

The next proposition gives the concrete form of \eqref{fdeltaD}.

\begin{prop} \label{prop A1delta}In $H^2(\mathbb{D}),$ it holds that
\begin{eqnarray}V^{-1}(\ma{L}([f_{\delta,1}]_{s}))=\bigvee\{(1+z)\phi^\lambda,\;0\leq \lambda\leq \delta\}=K_{z^2\phi^\delta}.\label{A1delta} \;\;\end{eqnarray}
\end{prop}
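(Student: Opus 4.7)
The plan is to build on the already-established intermediate identity \eqref{fdeltaD}, namely
$$V^{-1}(\mathcal{L}([f_{\delta,1}]_{s}))=\overline{K_{z\phi^\delta}+zK_{z\phi^\delta}},$$
and reduce the statement to the purely model-space equality
$$K_{z\phi^\delta}+zK_{z\phi^\delta}=K_{z^2\phi^\delta}.$$
A pleasant side effect of this reduction is that the sum turns out to be already closed (since it equals a model space), so the closure in \eqref{fdeltaD} costs nothing.

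To set up the bookkeeping, I would use \eqref{zphi} to write $K_{z\phi^\delta}=\mathbb{C}+K_{\phi^\delta}$, and apply Lemma \ref{lem bee} with the coprime inner functions $z^2$ and $\phi^\delta$ to obtain $K_{z^2\phi^\delta}=K_{z^2}\bigvee K_{\phi^\delta}$; since $K_{z^2}=\mathbb{C}+\mathbb{C}z$ is finite-dimensional, the span is a sum, so $K_{z^2\phi^\delta}=\mathbb{C}+\mathbb{C}z+K_{\phi^\delta}$. Thus the two sides of the claimed equality both decompose into explicit pieces and can be compared directly.

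For the inclusion $K_{z\phi^\delta}+zK_{z\phi^\delta}\subseteq K_{z^2\phi^\delta}$, the containment $K_{z\phi^\delta}\subseteq K_{z^2\phi^\delta}$ is immediate since $z\phi^\delta$ divides $z^2\phi^\delta$. The nontrivial piece is $zK_{\phi^\delta}\subseteq K_{z^2\phi^\delta}$: for $f\in K_{\phi^\delta}$ and $h\in H^2(\mathbb{D})$, the isometry of multiplication by $z$ gives
$$\langle zf,\;z^2\phi^\delta h\rangle=\langle f,\;z\phi^\delta h\rangle=0,$$
because $z\phi^\delta h\in\phi^\delta H^2(\mathbb{D})\perp K_{\phi^\delta}$. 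Combined with $\mathbb{C}z\subseteq K_{z^2\phi^\delta}$, this gives the forward inclusion. For the reverse inclusion $K_{z^2\phi^\delta}\subseteq K_{z\phi^\delta}+zK_{z\phi^\delta}$, I would check each of the three summands in turn: $\mathbb{C}\subseteq K_{z\phi^\delta}$, $\mathbb{C}z=zK_z\subseteq zK_{z\phi^\delta}$, and $K_{\phi^\delta}\subseteq K_{z\phi^\delta}$.

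Putting these inclusions together yields $K_{z\phi^\delta}+zK_{z\phi^\delta}=K_{z^2\phi^\delta}$ without any closure, and \eqref{fdeltaD} then delivers $V^{-1}(\mathcal{L}([f_{\delta,1}]_s))=K_{z^2\phi^\delta}$. The only step with any real content is the verification $zK_{\phi^\delta}\subseteq K_{z^2\phi^\delta}$; everything else is combinatorics of model-space decompositions and direct application of Lemma \ref{lem bee} and equation \eqref{zphi}. I expect no serious obstacle, but the argument deserves care to make sure the algebraic sum (and not merely its closure) already fills out $K_{z^2\phi^\delta}$.
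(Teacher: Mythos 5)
Your proposal is correct, but it takes a genuinely different route from the paper. The paper works with orthogonal complements: if $f\perp K_{z\phi^\delta}$ and $f\perp zK_{z\phi^\delta}$, then $f\in z\phi^\delta H^2(\mathbb{D})$ and $S^*f\in z\phi^\delta H^2(\mathbb{D})$, and writing $f=z\phi^\delta h$ forces $z$ to divide $h$, so $f\in z^2\phi^\delta H^2(\mathbb{D})$; this gives $K_{z^2\phi^\delta}\subseteq \overline{K_{z\phi^\delta}+zK_{z\phi^\delta}}$, and the reverse inclusion is the easy containment of both summands in the closed space $K_{z^2\phi^\delta}$. You instead decompose everything into explicit pieces via Lemma \ref{lem bee} and \eqref{zphi}, writing $K_{z\phi^\delta}=\mathbb{C}+K_{\phi^\delta}$ and $K_{z^2\phi^\delta}=\mathbb{C}+\mathbb{C}z+K_{\phi^\delta}$, and match the summands term by term; your key verification $zK_{\phi^\delta}\subseteq K_{z^2\phi^\delta}$ is the standard fact $\theta_1 K_{\theta_2}\subseteq K_{\theta_1\theta_2}$ and is checked correctly. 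What your approach buys is the slightly stronger conclusion that the algebraic sum $K_{z\phi^\delta}+zK_{z\phi^\delta}$ is itself closed, so no closure is needed; what the paper's approach buys is an argument phrased entirely in terms of divisibility by inner functions, which it then reuses verbatim in the inductive step of Theorem \ref{thm n} to get $\overline{K_{z^n\phi^\delta}+zK_{z^n\phi^\delta}}=K_{z^{n+1}\phi^\delta}$ (your decomposition $K_{z^n\phi^\delta}=\mathbb{C}+\cdots+\mathbb{C}z^{n-1}+K_{\phi^\delta}$ would handle that induction just as well). Both arguments rest on the previously established identity \eqref{fdeltaD}, so there is no gap.
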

\begin{proof}
For any $f\perp V^{-1}(\ma{L}([f_{\delta,1}]_{s})),$ \eqref{fdeltaD} implies $f\perp K_{z\phi^\delta}$ and $f\perp zK_{z\phi^\delta},$ which means $f\in z\phi^\delta H^2(\mathbb{D})$ and $S^*f\in z\phi^\delta H^2(\mathbb{D}).$ So we can suppose $f=z\phi^\delta h$ with some $h\in H^2(\mathbb{D}),$ and then $S^*f=\phi^\delta h\in z\phi^\delta H^2(\mathbb{D}),$ verifying $z$ divides $h$. Hence $f\in z^2\phi^\delta H^2(\mathbb{D}),$ this shows $$K_{z^2\phi^\delta}\subseteq  V^{-1}(\ma{L}([f_{\delta,1}]_{s})).$$ Further,  since $K_{z\phi^\delta}\subseteq K_{z^2\phi^\delta}$ and $zK_{z\phi^\delta}\subseteq K_{z^2\phi^\delta},$ so combining with \eqref{fdeltaD} we obtain \eqref{A1delta}.
\end{proof}
It is well known that a continuous operator $T$ acting between Banach spaces $X$ and $Y$ is bounded below if and only if $T$ is injective and has closed
range. Using this, we can show the fact below.

\begin{prop} The subspace $(1+z)K_{z\phi^\delta}$ is not closed in $H^2(\mathbb{D})$. \end{prop}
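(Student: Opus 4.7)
The plan is to show that $(1+z)K_{z\phi^\delta}$ is a proper dense subspace of the model space $K_{z^2\phi^\delta}$, which by definition rules out its being closed. First I would set up the bounded linear operator $T\colon K_{z\phi^\delta}\to H^2(\mathbb{D})$, $Th=(1+z)h$, and observe that it is injective, since $1+z$ is zero-free on $\mathbb{D}$. By the criterion recalled just before the statement, the range $(1+z)K_{z\phi^\delta}=T(K_{z\phi^\delta})$ is closed if and only if $T$ is bounded below, so it suffices to deny either property.

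The cleanest way to deny closedness is to exhibit an element of the closure that is not in the range. By Proposition~\ref{prop A1delta}, the closure equals $K_{z^2\phi^\delta}$, and I would single out the constant function $1$. It lies in $K_{z^2\phi^\delta}$ because $z^2\phi^\delta$ vanishes at the origin, forcing $\langle 1,\,z^2\phi^\delta g\rangle=\overline{(z^2\phi^\delta g)(0)}=0$ for every $g\in H^2(\mathbb{D})$. On the other hand, if $1=(1+z)h$ with $h\in K_{z\phi^\delta}\subset H^2(\mathbb{D})$, then necessarily $h(z)=1/(1+z)=\sum_{n\geq 0}(-1)^n z^n$, whose Taylor coefficients are not square-summable; so $h\notin H^2(\mathbb{D})$, contradiction. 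Hence $1$ lies in $\overline{(1+z)K_{z\phi^\delta}}\setminus(1+z)K_{z\phi^\delta}$, which proves the subspace is not closed.

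Equivalently, via the bounded-below characterization, one could construct a unit-norm sequence $h_n\in K_{z\phi^\delta}$ with $\|(1+z)h_n\|\to 0$; a natural candidate is the normalized reproducing kernels of $K_{z\phi^\delta}$ at points $w_n=-1+r_n$ with $r_n\to 0^+$. The rapid decay $|\phi^\delta(w_n)|=e^{-\delta(2-r_n)/r_n}\to 0$ together with $1-|w_n|^2\to 0$ forces $\|k_{w_n}\|\to\infty$, while the factor $1+z$ cancels the singularity of $1/(1-\overline{w_n}z)$ at $-1$ and keeps $\|(1+z)k_{w_n}\|$ bounded. The only delicate point in this alternative route is the cancellation, which reduces to the elementary computation $\|(1+z)/(1-\overline{w_n}z)\|_{H^2}^{2}=2/(1+|w_n|)$, bounded as $|w_n|\to 1$. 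Either route closes the argument, but the first is cleaner because $1\in K_{z^2\phi^\delta}\setminus(1+z)H^2(\mathbb{D})$ is a single, explicit obstruction.
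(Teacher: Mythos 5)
Your primary argument is correct but takes a genuinely different route from the paper. You invoke Proposition \ref{prop A1delta} to identify the closure of $(1+z)K_{z\phi^\delta}$ as $K_{z^2\phi^\delta}$ and then exhibit the single explicit obstruction $1\in K_{z^2\phi^\delta}\setminus (1+z)H^2(\mathbb{D})$ (indeed $1/(1+z)\notin H^2(\mathbb{D})$, and $1\in K_{z^2}\subseteq K_{z^2\phi^\delta}$), which immediately shows the subspace is a proper dense subspace of its closure. The paper instead works directly with the reproducing kernels $k_v$ of $K_{z\phi^\delta}$: it computes $\|k_v\|^2=(1-|v\phi^\delta(v)|^2)/(1-|v|^2)\to\infty$ as $v=-r\to -1$, while an exact computation of $\langle k_v,zk_v\rangle$ gives $\|(1+z)k_v\|^2\to 1$, so multiplication by $1+z$ is not bounded below on $K_{z\phi^\delta}$ and the range is not closed. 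Your approach is shorter and more conceptual, at the cost of leaning on Proposition \ref{prop A1delta} (legitimate, since that result precedes this one and does not use it); the paper's argument is self-contained and quantitative, and does not need to know what the closure actually is. Your alternative route via normalized reproducing kernels at $w_n=-1+r_n$ is essentially the paper's proof; your formula $\|(1+z)/(1-\overline{w}z)\|^2=2/(1+|w|)$ checks out and matches the paper's limit, though to be complete along that route you should also bound the correction term $\overline{v\phi^\delta(v)}\,(1+z)z\phi^\delta(z)/(1-\overline{v}z)$ coming from the model-space kernel, which the paper handles by computing $\langle k_v,zk_v\rangle$ exactly.
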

\begin{proof}
Let $$k_v(z)=\frac{1-\overline{v}\overline{\phi^\delta(v)} z\phi^\delta(z)}{1-\overline{v}z}$$ denote the reproducing kernel for the model space $K_{z\phi^\delta}.$ Then
\begin{eqnarray*}\|k_v\|^2=k_v(v)=\frac{1-|v\phi^\delta(v)|^2}
{1-|v|^2}. \end{eqnarray*} For $v\rightarrow -1$ nontangentially, it holds that  $\phi^\delta(v)\rightarrow 0$ and then \begin{eqnarray}\|k_v\|^2\rightarrow \infty .\label{kvinfty}\end{eqnarray}

On the other hand, it holds $\|(1+z)k_v\|^2=2\|k_v\|^2+ 2\mbox{Re} \la k_v, zk_v\ra.$ Since $k_v$ is orthogonal to $\overline{v}\overline{\phi^\delta(v)}z^2\phi^\delta(z)
(1-\overline{v}z)^{-1}$, we have that
\begin{eqnarray*} \la k_v,zk_v\ra&=&\la \frac{1-\overline{v}\overline{\phi^\delta(v)} z\phi^\delta(z)}{1-\overline{v}z},\frac{z}{1-\overline{v}z}\ra\\&=&
\la \frac{1}{1-\overline{v}z}, \frac{z}{1-\overline{v}z}\ra-\la \frac{\overline{v}
\overline{\phi^\delta(v)}z\phi^\delta(z)}{1-\overline{v}z}, \frac{z}{1-\overline{v}z}\ra\\&=& \frac{\overline{v}(1-|\phi^\delta(v)|^2)}{1-|v|^2},\end{eqnarray*}
where we use the fact $(1-\overline{v}z)^{-1}$ is the reproducing kernel for $H^2(\mathbb{D}).$
 So we deduce that
\begin{eqnarray*}\|(1+z)k_v\|^2 =2\frac{1-|v\phi^\delta(v)|^2}{1-|v|^2}+2\mbox{Re} \left( \frac{\overline{v}(1-|\phi^\delta(v)|^2)}{1-|v|^2}\right). \end{eqnarray*}
Let $v=-r$ with $0<r<1,$ and then we get that
\begin{eqnarray}\|(1+z)k_v\|^2&=&
2\frac{1-|r\phi^\delta(-r)|^2}{1-r^2}-2r\frac{1-|\phi^\delta(-r)|^2}{1-r^2}
\nonumber\\&=& \frac{2-2r}{1-r^2}-\frac{(2r^2-2r)|\phi^\delta(-r)|^2}{1-r^2}\nonumber\\
 &=& \frac{2}{1+r}+2r\frac{|\phi^\delta(-r)|^2}{1+r}\nonumber
\\&\rightarrow& 1\;\mbox{as}\quad r\rightarrow 1.\label{z+1kv}
 \end{eqnarray}
Then \eqref{kvinfty} together with \eqref{z+1kv} imply the injective map $f\rightarrow (1+z)f$ is not bounded below on $K_{z\phi^\delta},$ so  $(1+z)K_{z\phi^\delta}$ is not closed in $H^2(\mathbb{D})$. \end{proof}
In $H^2(\mathbb{C}_+),$ Proposition \ref{prop A1delta} implies a characterization for the subspace $\ma{L}([f_{\delta,1}]_{s})$ with some $\delta>0$.
\begin{thm}\label{thm Lap} The Laplace transform of the smallest nearly $\{S(t)^*\}_{t\geq 0}$ invariant subspace containing $f_{\delta,1}$ with some $\delta>0$ has the form \begin{align*}\ma{L}([f_{\delta,1}]_{s})=\bigvee\{\frac{e^{-\lambda s}}{(1+s)^2},\;0\leq \lambda\leq \delta\}=
K_{\left(\frac{1-s}{1+s}\right)^2e^{-\delta s}},\end{align*}
where $K_{\left(\frac{1-s}{1+s}\right)^2e^{-\delta s}}$ is a model space in $H^2(\mathbb{C}_+)$.
\end{thm}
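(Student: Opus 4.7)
The plan is to deduce this theorem as an immediate consequence of Proposition \ref{prop A1delta} by transporting that equality from $H^2(\mathbb{D})$ back to $H^2(\mathbb{C}_+)$ via the isometric isomorphism $V$. The spanning description $\ma{L}([f_{\delta,1}]_{s})=\bigvee\{e^{-\lambda s}/(1+s)^2 : 0\leq\lambda\leq\delta\}$ is already in hand from the explicit characterization $[f_{\delta,1}]_{s}=\bigvee\{(\zeta-\lambda)e_\lambda: 0\leq\lambda\leq\delta\}$ combined with the Laplace transform identity $e^{\delta}\ma{L}(f_{\delta,1})(s)=e^{-\delta s}/(1+s)^2$, so the only substantive task is to identify this span with the stated model space.

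The key step is a general transformation rule for model spaces under $V$. For any inner function $\theta$ on $\mathbb{D}$, set $\tilde\theta(s):=\theta(M(s))$, which is inner on $\mathbb{C}_+$ since $M:\mathbb{C}_+\to\mathbb{D}$. For $h\in H^2(\mathbb{D})$, the defining formula for $V$ in \eqref{Vmap} gives
$$
(V(\theta h))(s)=\frac{1}{\sqrt{\pi}(1+s)}\theta(M(s))h(M(s))=\tilde\theta(s)\,(Vh)(s),
$$
so $V(\theta H^2(\mathbb{D}))=\tilde\theta\,H^2(\mathbb{C}_+)$. Taking orthogonal complements inside the isometry yields $V(K_\theta)=K_{\tilde\theta}$.

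I would then apply this with $\theta(z)=z^2\phi^\delta(z)$. Since $M(s)=(1-s)/(1+s)$ and the self-inverse property $M\circ M=\mathrm{id}$ together with $\phi^\delta(z)=\exp(-\delta(1-z)/(1+z))$ give $\phi^\delta(M(s))=e^{-\delta s}$, we obtain $\tilde\theta(s)=\bigl(\tfrac{1-s}{1+s}\bigr)^2 e^{-\delta s}$. Applying $V$ to both sides of Proposition \ref{prop A1delta} and using $V\circ V^{-1}=I$ on $H^2(\mathbb{C}_+)$ therefore produces
$$
\ma{L}([f_{\delta,1}]_{s})=V(K_{z^2\phi^\delta})=K_{\left(\frac{1-s}{1+s}\right)^2 e^{-\delta s}},
$$
which is precisely the claim.

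There is no real obstacle once Proposition \ref{prop A1delta} is in place: the theorem is essentially a translation via the Cayley-type isomorphism $V$. The only care needed is the brief verification that multiplication by an inner function $\theta$ on $H^2(\mathbb{D})$ corresponds, under $V$, to multiplication by $\theta\circ M$ on $H^2(\mathbb{C}_+)$, which is immediate from the factorized form of $V$.
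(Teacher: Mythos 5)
Your proposal is correct and follows exactly the route the paper intends: the paper gives no separate argument for Theorem \ref{thm Lap} beyond the remark that Proposition \ref{prop A1delta} implies it via the map $V$ in \eqref{Vmap}, and your verification that $V(\theta h)=(\theta\circ M)\,Vh$, hence $V(K_\theta)=K_{\theta\circ M}$ with $(z^2\phi^\delta)\circ M=\bigl(\tfrac{1-s}{1+s}\bigr)^2e^{-\delta s}$, is precisely the omitted computation. No issues.
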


\subsection{More general examples on the smallest (cyclic) nearly invariant subspaces} In this subsection, we suppose the nearly $\{S(t)^*\}_{t\geq 0}$ invariant subspace $\mathcal{N}\subset L^2(0, \infty)$ contains the function $f_{\delta,n}$ in Lemma \ref{lem fndelta}, which can degenerate $e_\delta$ and  $f_{\delta,1}$ with $n=0$ and $n=1.$

\begin{lem} \label{lem fndelta} Using the Laplace transform in \eqref{Lap1}, we have  \begin{eqnarray}e^{\delta} \ma{L}(f_{\delta,n})(s)=\frac{e^{-\delta s}}{(1+s)^{n+1}}\label{Lfdeltan}\end{eqnarray} holds for the functions  \begin{eqnarray}f_{\delta,n}(\zeta)=\frac{(\zeta-\delta)^n}{n!}
e_\delta(\zeta) \label{fdeltan} \end{eqnarray} with $e_{\delta}(\zeta)=e^{-\zeta}\chi_{(\delta,\infty)}(\zeta),$ $\delta>0,$ and any nonnegative integer $n$.
\end{lem}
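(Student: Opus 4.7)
The identity to be proved is a direct calculation, so my plan is to compute the integral $\mathcal{L}(f_{\delta,n})(s)$ head-on using the definition \eqref{Lap1}, then translate to eliminate the characteristic function. Specifically, I would start from
\[
\mathcal{L}(f_{\delta,n})(s) \;=\; \int_0^\infty e^{-s\zeta}\,\frac{(\zeta-\delta)^n}{n!}\,e^{-\zeta}\chi_{(\delta,\infty)}(\zeta)\,d\zeta \;=\; \frac{1}{n!}\int_\delta^\infty (\zeta-\delta)^n\, e^{-(s+1)\zeta}\,d\zeta,
\]
and perform the change of variable $u = \zeta - \delta$ to pull out the shift factor $e^{-(s+1)\delta}$, leaving $\frac{1}{n!}\int_0^\infty u^n e^{-(s+1)u}\,du$.

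The remaining step is to recognize the standard gamma integral $\int_0^\infty u^n e^{-au}\,du = n!/a^{n+1}$, valid whenever $\mathrm{Re}(a)>0$. Here $a = s+1$ satisfies $\mathrm{Re}(a) > 0$ on $\mathbb{C}_+$ (in fact on $\mathrm{Re}(s) > -1$, which is an open neighbourhood of the closed right half-plane), so the evaluation gives
\[
\mathcal{L}(f_{\delta,n})(s) \;=\; e^{-(s+1)\delta}\cdot \frac{1}{(s+1)^{n+1}} \;=\; \frac{e^{-\delta}\, e^{-\delta s}}{(1+s)^{n+1}}.
\]
Multiplying through by $e^{\delta}$ yields the claimed formula \eqref{Lfdeltan}, and specialising to $n=0$ and $n=1$ recovers the earlier identities for $e_\delta$ and $f_{\delta,1}$.

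There is really no obstacle here: the only thing worth verifying is that $f_{\delta,n}\in L^1(\mathbb{R})\cap L^2(\mathbb{R})$ (so that the Laplace transform \eqref{Lap1} is legitimately defined), which is immediate because $(\zeta-\delta)^n e^{-\zeta}$ is rapidly decaying on $(\delta,\infty)$. If one prefers to avoid the explicit gamma computation, an alternative is induction on $n$: the base case $n=0$ is \eqref{eE}, and the inductive step follows by noting that $\frac{d}{ds}\mathcal{L}(f_{\delta,n-1})(s) = -\mathcal{L}(\zeta f_{\delta,n-1})(s)$ together with $\zeta f_{\delta,n-1} = (\zeta-\delta) f_{\delta,n-1} + \delta f_{\delta,n-1} = n f_{\delta,n} + \delta f_{\delta,n-1}$, which lets one recover $\mathcal{L}(f_{\delta,n})$ by differentiating. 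Either route is routine, so the statement reduces to a direct verification.
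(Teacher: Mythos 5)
Your computation is correct and amounts to the same direct evaluation of $\int_\delta^\infty (\zeta-\delta)^n e^{-(s+1)\zeta}\,d\zeta$ that the paper performs; the only cosmetic difference is that the paper organizes it as the integration-by-parts recursion $I_n(s)=\tfrac{n}{1+s}I_{n-1}(s)$ iterated down to the known case \eqref{eE}, whereas you substitute $u=\zeta-\delta$ and quote the gamma integral $\int_0^\infty u^n e^{-au}\,du=n!/a^{n+1}$. Both routes are equivalent, and your remark on the alternative inductive argument via differentiation in $s$ is also fine, though unnecessary.
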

\begin{proof}
Denote $$I_n(s)=\int_\delta^\infty e^{-(s+1)t}(t-\delta)^n dt,$$ it follows that
\begin{eqnarray*} I_n(s)=\frac{n}{1+s}I_{n-1}(s).\label{Inn-1} \end{eqnarray*}
By iterations, we further have
\begin{eqnarray*}I_n(s)=\frac{n!}{(1+s)^{n}}I_0(s)
=\frac{n!e^{-(s+1)\delta}}{(1+s)^{n+1}},\label{In} \end{eqnarray*} by the display \eqref{eE} for $I_0(s)$. And then it turns out
$$\ma{L}(f_{\delta,n})(s)=\frac{I_n(s)}{n!}
=\frac{e^{-(s+1)\delta}}{(1+s)^{n+1}}.$$ This means the equation  \eqref{Lfdeltan} is true.
\end{proof}
 Here we first present the mapped subspaces of the smallest (cyclic) nearly $\{S(t)^*\}_{t\geq 0}$ invariant subspace $[f_{\delta,n}]_s$  in Hardy spaces.
\begin{thm}\label{thm n} For any nonnegative integer $n$ and $\delta>0$, the following statements are true.

$(1)$ In $H^2(\mathbb{D}),$ it holds that the cyclic nearly $\{T(t)^*\}_{t\geq 0}$ invariant subspace $\bigvee\{(1+z)^n\phi^\lambda,\; 0\leq \lambda\leq \delta\}=K_{z^{n+1}\phi^\delta};$\vspace{2mm}

$(2)$ In $H^2(\mathbb{C}_+),$ it holds that the cyclic nearly $\{M(t)^*\}_{t\geq 0}$ invariant subspace $
 \bigvee\{\frac{e^{-\lambda s}}{(1+s)^{n+1}},\; 0\leq \lambda\leq \delta\}=K_{ \left(\frac{1-s}{1+s}\right)^{n+1}e^{-\delta s}}.$
\end{thm}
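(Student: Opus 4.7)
The plan is to prove part (1) by induction on $n$, with the two already-established cases $n=0$ (Corollary \ref{cor phidelta}) and $n=1$ (Proposition \ref{prop A1delta}) serving as base cases. Writing $A_n := \bigvee\{(1+z)^n\phi^\lambda,\; 0\leq \lambda\leq \delta\}$, I would assume $A_{n-1} = K_{z^n\phi^\delta}$ and aim to show $A_n = K_{z^{n+1}\phi^\delta}$.

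First I would establish that $A_n$ sits between $K_{z^n\phi^\delta} + zK_{z^n\phi^\delta}$ and $K_{z^{n+1}\phi^\delta}$. For the lower inclusion: from \eqref{nD} together with the induction hypothesis, $K_{z^n\phi^\delta} = A_{n-1} \subseteq A_n$; and writing $z(1+z)^{n-1}\phi^\lambda = (1+z)^n\phi^\lambda - (1+z)^{n-1}\phi^\lambda$ shows that $zA_{n-1} \subseteq A_n$, giving $zK_{z^n\phi^\delta} \subseteq A_n$. For the upper inclusion, note that $K_{z^n\phi^\delta} \subseteq K_{z^{n+1}\phi^\delta}$ (since $z^n\phi^\delta$ divides $z^{n+1}\phi^\delta$) and $zK_{z^n\phi^\delta} \subseteq K_{z^{n+1}\phi^\delta}$ (since $\la zg, z^{n+1}\phi^\delta h\ra = \la g, z^n\phi^\delta h\ra = 0$ for $g\in K_{z^n\phi^\delta}$); hence $(1+z)K_{z^n\phi^\delta} \subseteq K_{z^{n+1}\phi^\delta}$, and consequently every generator $(1+z)^n\phi^\lambda = (1+z)\cdot (1+z)^{n-1}\phi^\lambda$ of $A_n$ lies in $K_{z^{n+1}\phi^\delta}$ by the induction hypothesis.

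It then remains to identify $\overline{K_{z^n\phi^\delta} + zK_{z^n\phi^\delta}} = K_{z^{n+1}\phi^\delta}$. The inclusion $\subseteq$ is immediate from the argument above. For $\supseteq$, I would mimic the proof of Proposition \ref{prop A1delta}: take $f \perp K_{z^n\phi^\delta} + zK_{z^n\phi^\delta}$, so $f \in z^n\phi^\delta H^2(\mathbb{D})$; and since $\la f, zg\ra = \la S^*f, g\ra$ for $g\in H^2(\mathbb{D})$, also $S^*f \in z^n\phi^\delta H^2(\mathbb{D})$. Writing $f = z^n\phi^\delta h$ and using $f(0)=0$ (automatic because $n\geq 1$ in the induction step), one gets $S^*f = z^{n-1}\phi^\delta h$, and the condition $S^*f \in z^n\phi^\delta H^2(\mathbb{D})$ forces $z\mid h$, hence $f\in z^{n+1}\phi^\delta H^2(\mathbb{D})$, i.e.\ $f\perp K_{z^{n+1}\phi^\delta}$. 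Sandwiching $A_n$ between the two equal closed spaces completes the induction.

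Part (2) should then follow by transferring via the isomorphism $V$ in \eqref{Vmap}. A direct computation using $1+M(s) = 2/(1+s)$ and $\phi^\lambda(M(s)) = e^{-\lambda s}$ shows
\[
V^{-1}\!\left(\frac{e^{-\lambda s}}{(1+s)^{n+1}}\right) = \frac{\sqrt{\pi}}{2^n}(1+z)^n\phi^\lambda(z),
\]
so part (1) identifies the image under $V^{-1}$ of the left-hand side of (2) with $K_{z^{n+1}\phi^\delta}$. Since $V$ intertwines multiplication by any inner $\Theta$ on $\mathbb{D}$ with multiplication by $\Theta\circ M$ on $\mathbb{C}_+$, one has $V(K_\Theta) = K_{\Theta\circ M}$; applied to $\Theta = z^{n+1}\phi^\delta$ and using $M(s)^{n+1}\phi^\delta(M(s)) = \bigl(\frac{1-s}{1+s}\bigr)^{n+1}e^{-\delta s}$, the right-hand side of (2) emerges. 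The main obstacle is the orthogonality step identifying $\overline{K_{z^n\phi^\delta} + zK_{z^n\phi^\delta}}$ with $K_{z^{n+1}\phi^\delta}$, but the $S^*$-argument from the $n=1$ case generalizes cleanly once one has the induction hypothesis in hand.
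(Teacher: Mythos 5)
Your proposal is correct and follows essentially the same route as the paper's proof: induction on $n$, using the inclusion \eqref{nD} to show $\bigvee\{(1+z)^n\phi^\lambda,\;0\leq\lambda\leq\delta\}$ coincides with $\overline{K_{z^n\phi^\delta}+zK_{z^n\phi^\delta}}$, and then the $S^*$-orthogonality argument of Proposition \ref{prop A1delta} to identify that closure with $K_{z^{n+1}\phi^\delta}$. Part (2) is likewise obtained in the paper by transferring via the map $V$, so there is no substantive difference.
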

\begin{proof} Since $(2)$ can be deduced using the map $V$ in \eqref{V-map} and the result $(1),$ we only need to prove $(1).$ By
mathematical induction, it holds for $n=0,$ and we suppose it is true for $n-1$, that is, $\bigvee\{(1+z)^{n-1}\phi^\lambda,\; 0\leq \lambda\leq \delta\}=K_{z^{n}\phi^\delta}$. Then it turns out that
\begin{eqnarray*}
 &&\bigvee\{(1+z)^n\phi^\lambda,\; 0\leq \lambda\leq \delta\}\\&&=\overline{(1+z)\bigvee\{(1+z)^{n-1}\phi^\lambda,\; 0\leq \lambda\leq \delta\}}\\&&=\overline{(1+z) K_{z^n\phi^\delta}}.\end{eqnarray*}
 By the formula in \eqref{nD}, it follows that $(1+z)^{n-1}\phi^\lambda,\;z(1+z)^{n-1}\phi^\lambda \in \overline{(1+z) K_{z^n\phi^\delta}}$ for $0\leq \lambda \leq \delta$, implying
 $$\overline{K_{z^n\phi^\delta}+zK_{z^n\phi^\delta}}\subseteq \overline{(1+z) K_{z^n\phi^\delta}}.$$ Since the above converse inclusion is obvious, it yields that $$
 \bigvee\{(1+z)^n\phi^\lambda,\; 0\leq \lambda\leq \delta\}=\overline{K_{z^n\phi^\delta}+zK_{z^n\phi^\delta}}.$$ By the similar proof of Proposition \ref{prop A1delta}, we obtain $$\overline{K_{z^n\phi^\delta}+zK_{z^n\phi^\delta}}=K_{z^{n+1}\phi^\delta}.$$
\end{proof}
Now we can formulate a corollary for the Laplace transform of  $[f_{\delta,n}]_s.$
\begin{cor} The Laplace transform of the smallest nearly $\{S(t)^*\}_{t\geq 0}$ invariant subspace containing $f_{\delta,n}$ in \eqref{fdeltan} has the form
 $$\ma{L}([f_{\delta,n}]_s]=K_{ \left(\frac{1-s}{1+s}\right)^{n+1}e^{-\delta s}}$$ for $\delta>0$ and any nonnegative integer $n$.
 \end{cor}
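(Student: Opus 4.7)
The plan is to recognize that this corollary is essentially a direct consequence of combining Lemma \ref{lem fndelta} with part (2) of Theorem \ref{thm n}; the work is mostly bookkeeping that lifts the Hardy-space identity back to $L^2(0,\infty)$ via the Laplace transform.

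First, I would compute the translates $S(\delta-\lambda)^*f_{\delta,n}$ for $0\leq \lambda\leq \delta$. Using the formula $(S(t)^*f)(\zeta)=f(\zeta+t)$ and the explicit form of $f_{\delta,n}$ in \eqref{fdeltan}, a short calculation gives
\[
S(\delta-\lambda)^*f_{\delta,n}\;=\;e^{-(\delta-\lambda)}\,\frac{(\zeta-\lambda)^n}{n!}\,e_\lambda(\zeta),
\]
which lies in $\mathcal{N}$ by the near $\{S(t)^*\}_{t\geq 0}$ invariance (exactly the mechanism described in case (ii) of the introduction). Consequently,
\[
[f_{\delta,n}]_s\;=\;\bigvee\Bigl\{\frac{(\zeta-\lambda)^n}{n!}\,e_\lambda(\zeta)\,:\,0\leq\lambda\leq\delta\Bigr\}.
\]

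Next, I would apply the Laplace transform term by term. By Lemma \ref{lem fndelta} applied with parameter $\lambda$ in place of $\delta$, each generator maps as
\[
\mathcal{L}\!\left(\frac{(\zeta-\lambda)^n}{n!}e_\lambda\right)(s)\;=\;\frac{e^{-\lambda s}}{(1+s)^{n+1}}.
\]
Since $\mathcal{L}$ is a (scaled) linear isomorphism of Hilbert spaces by Paley--Wiener, it preserves closed linear spans, so
\[
\mathcal{L}([f_{\delta,n}]_s)\;=\;\bigvee\Bigl\{\frac{e^{-\lambda s}}{(1+s)^{n+1}}\,:\,0\leq\lambda\leq\delta\Bigr\}.
\]

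Finally, I would invoke Theorem \ref{thm n}(2), which identifies this closed linear span with the model space $K_{((1-s)/(1+s))^{n+1}e^{-\delta s}}$ in $H^2(\mathbb{C}_+)$. This yields the claimed equality. Since every nontrivial step is already either an elementary translation calculation, the Paley--Wiener isometry, or a direct quotation of Lemma \ref{lem fndelta} and Theorem \ref{thm n}, there is no genuine obstacle here; the only minor point requiring care is to confirm that the inclusions $\mathcal{N}\ni S(\delta-\lambda)^*f_{\delta,n}$ really do generate $[f_{\delta,n}]_s$ and not merely a subspace of it, which follows from the same argument as in Section 3.1 for the case $n=1$.
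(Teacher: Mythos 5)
Your proposal is correct and follows exactly the route the paper intends: compute the translates $S(\delta-\lambda)^*f_{\delta,n}=e^{-(\delta-\lambda)}\frac{(\zeta-\lambda)^n}{n!}e_\lambda$, pass to $H^2(\mathbb{C}_+)$ via Lemma \ref{lem fndelta} and Paley--Wiener, and quote Theorem \ref{thm n}(2); the paper states the corollary without further proof precisely because these are the only ingredients. The one tiny slip is that $\ma{L}\bigl(\frac{(\zeta-\lambda)^n}{n!}e_\lambda\bigr)(s)=e^{-\lambda}\,e^{-\lambda s}(1+s)^{-(n+1)}$ carries the extra scalar $e^{-\lambda}$, which is of course immaterial for the closed linear span.
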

 \begin{rem} It is known that $\{\sqrt{2\pi}p_n(t)e^{-t}\}_{n=0}^\infty$ forms an orthonormal basis for $L^2(0,\infty),$ with $p_n(t)=\pm L_n(2t)/\sqrt{\pi}$ (a real polynomial of degree $n$) and $L_n$ denotes the Laguerre polynomial $L_n(t)=\frac{e^t}{n!} \frac{d^n}{dt^n}(t^ne^{-t})$. So when we consider the smallest (cyclic) nearly $\{S(t)^*\}_{t\geq 0}$ invariant subspace containing $f_{\delta,n}$ in \eqref{fdeltan} for some $\delta>0$ and nonnegative integer $n,$ it covers many important cases for the Question 1. \end{rem}

\subsection{The characterization of $\overline{gK_{z\phi^\delta}}$ for some general $g$}
Inspired by the result $(1)$ in Theorem \ref{thm n}, we continue to present the concrete formula of the subspace $$c(g):=\overline{gK_{z\phi^\delta}}=\bigvee\{g\phi^\lambda,\;0\leq \lambda\leq \delta\}$$ for a more general function $g\in L^\infty(\mathbb{T}).$
First of all, we demonstrate the subspace like $\overline{gK_{\theta}}$ is nearly $S^*$ invariant for $g\in H^\infty(\mathbb{D})$ with $g(0)\neq 0$
(not necessarily an isometric multiplier)
 and a non-constant inner function $\theta.$

\begin{thm} \label{thm nearlyS*} Let $g\in H^\infty(\mathbb{D})$ with $g(0)\neq 0$ and $\theta$ a non-constant inner function. Then $\overline{g K_\theta}$ is nearly $S^*$ invariant, and so by Hitt's theorem it can be written as $hK,$ where $K$ is $S^*$-invariant (either a model space or $H^2(\mathbb{D})$ itself) and $h\in H^2(\mathbb{D})$ is a function such that multiplication by $h$ is isometric on $K$.\end{thm}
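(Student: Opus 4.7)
The plan is to verify directly the classical nearly $S^*$-invariance of $\overline{gK_\theta}$ (i.e.\ that $f \in \overline{gK_\theta}$ with $f(0)=0$ forces $f/z \in \overline{gK_\theta}$) and then quote Hitt's Theorem~\ref{thm Hitt} to obtain the factorization $hK$.

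Fix $f \in \overline{gK_\theta}$ with $f(0)=0$ and choose $k_n \in K_\theta$ with $gk_n \to f$ in $H^2(\mathbb{D})$. Evaluation at $0$ is continuous on $H^2(\mathbb{D})$, so $g(0)k_n(0) = (gk_n)(0) \to f(0) = 0$, and since $g(0)\neq 0$, we get $k_n(0)\to 0$. The difficulty is that we cannot in general divide $gk_n$ by $z$ term by term because $(gk_n)(0)$ need not vanish; the plan is therefore to perturb $k_n$ slightly, inside $K_\theta$, so that its value at the origin becomes exactly zero while preserving the convergence $gk_n \to f$.

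This perturbation is carried out using the reproducing kernel of $K_\theta$ at $0$, namely $k_0^\theta(z) = 1 - \overline{\theta(0)}\theta(z)$, which satisfies $k_0^\theta(0) = 1 - |\theta(0)|^2 > 0$ because $\theta$ is a nonconstant inner function. Setting
$$r_n := k_n - \frac{k_n(0)}{1-|\theta(0)|^2}\, k_0^\theta \in K_\theta,$$
we have $r_n(0) = 0$ and $gr_n = gk_n - \alpha_n\, g k_0^\theta$ with $\alpha_n = k_n(0)/(1-|\theta(0)|^2) \to 0$. Since $g\in H^\infty(\mathbb{D})$ gives $gk_0^\theta \in H^2(\mathbb{D})$, we conclude $gr_n \to f$ in $H^2(\mathbb{D})$.

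Because $r_n(0) = 0$, $S^*$-invariance of $K_\theta$ yields $r_n/z = S^* r_n \in K_\theta$, whence $g(r_n/z) = (gr_n)/z = S^*(gr_n) \in gK_\theta$. Continuity of $S^*$ on $H^2(\mathbb{D})$ then gives $S^*(gr_n) \to S^*f = f/z$, so $f/z \in \overline{gK_\theta}$, establishing nearly $S^*$-invariance. Applying Hitt's Theorem~\ref{thm Hitt} produces the desired representation $\overline{gK_\theta}=hK$ with $K$ an $S^*$-invariant subspace (a model space or $H^2(\mathbb{D})$) and $h$ an isometric multiplier on $K$. The principal obstacle is the limit transfer across the division by $z$; the hypotheses $g(0)\neq 0$ (to force $k_n(0)\to 0$) and $\theta$ nonconstant (to guarantee $k_0^\theta(0)>0$) are exactly what make the reproducing-kernel correction legitimate.
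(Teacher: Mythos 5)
Your proof is correct and takes essentially the same approach as the paper's: both arguments perturb the approximating sequence inside $gK_\theta$ by a scalar multiple of a fixed element of $K_\theta$ that does not vanish at the origin, so that the corrected approximants vanish at $0$, and then divide by $z$ and pass to the limit using continuity of $S^*$. The only (immaterial) difference is the choice of that correcting element: you use the reproducing kernel $k_0^\theta=1-\overline{\theta(0)}\theta$, whose value $1-|\theta(0)|^2$ at $0$ is positive since $\theta$ is non-constant, whereas the paper uses $S^{*n_0}\theta$ for a suitable $n_0\geq 1$ with $(S^{*n_0}\theta)(0)\neq 0$.
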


\begin{proof}Since $\theta$ is non-constant, there is an $n_0\geq 1$ such that $(S^{*n_0} \theta)(0)\neq 0.$ Without loss of generality, suppose  $g(0)(S^{*n_0} \theta)(0)=1.$   Take $f\in \overline{gK_\theta}$ with $f(0)=0,$ so there exist $f_n\in gK_\theta$ with $\|f_n-f\|_2\rightarrow 0$ as $n\rightarrow \infty.$ Particularly, $f_n(0)=f_n(0)-f(0)=\la f_n-f, 1\ra\rightarrow 0,$ as $n\rightarrow \infty.$\vspace{1mm}

Let $F_n=f_n-f_n(0) g S^{*n_0} \theta\in gK_\theta$ (since $S{^{* n_0}} \theta \in  K_\theta$)  with $F_n(0)=0$ and $F_n\rightarrow f$ as $n\rightarrow\infty$. Further it holds $F_n=gk_n$ with $k_n\in K_\theta$ such that $k_n(0)=0$ and $S^* F_n=F_n/z=gk_n/z=gS^* k_n\in gK_\theta$ due to $K_\theta$ is $S^*$ invariant. Now we conclude that
\begin{align*} \|S^* F_n-S^* f\|_2&\leq   \|S^*\|\cdot\|F_n-f\|_2\rightarrow 0,\end{align*}as $n\rightarrow \infty.$ This means $S^* f\in \overline{gK_\theta},$ ending the proof.\end{proof}



It is known that every rational function $p/q\in H^2(\mathbb{D})$ in its lowest terms has $q$ invertible in $H^\infty(\mathbb{D})$, and without loss of generality $p$ is a polynomial with zeros on the unit circle $\mathbb{T}$, since zeros inside the open disc can be removed using Blaschke factors. Next we concentrate on finding $c(\widetilde{p})$ when $\widetilde{p}$ is a polynomial with zeros on $\mathbb{T}$.

Denote $\widetilde{p}_N(z):=\prod_{j=1}^N (z+w_j)$, $N\geq 1$ and $w_j\in \mathbb{T}$ for $j=1,\cdots, N.$ Theorem \ref{thm nearlyS*} implies $c(\widetilde{p}_N)$ is a nearly $S^*$ invariant subspace in $H^2(\mathbb{D}).$ For a further description of $c(\widetilde{p}_N),$ we cite a result from \cite{CaP2}. We say $h\in H^2(\mathbb{D})\setminus\{0\}$ is contained in a \emph{minimal} Toeplitz kernel $\mathcal{K}_{min}(h)$ means that every Toeplitz kernel $K$ with $h\in K$ contains $\mathcal{K}_{min}(h).$
\begin{lem}\label{lem Toeplitz kernel}  \cite[Theorem 3.3]{CaP2} Let $h\in H^2\setminus\{0\}$ and $h=IO$ be its inner-outer factorization. Then there exists a minimal Toeplitz kernel containing span$\{h\},$ written $\mathcal{K}_{min}(h)$ with $$ \mathcal{K}_{min}(h)=ker T_{\overline{z}\ol{IO}/O}.$$\end{lem}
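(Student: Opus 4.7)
The plan is to establish the formula $\mathcal{K}_{min}(h)=\ker T_{\bar{z}\,\overline{IO}/O}$ in two stages: first verify the inclusion $h\in\ker T_{\bar{z}\,\overline{IO}/O}$, and second prove minimality, namely that every Toeplitz kernel containing $h$ must contain $\ker T_{\bar{z}\,\overline{IO}/O}$. A preliminary observation is that the symbol is unimodular, since $|\bar{z}\,\overline{IO}/O|=|I|\,|O|/|O|=1$ a.e.\ on $\mathbb{T}$, so $T_{\bar{z}\,\overline{IO}/O}$ is a bounded operator on $H^2(\mathbb{D})$.

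For the first stage, I would compute directly using $|I|=1$ on $\mathbb{T}$:
\begin{equation*}
T_{\bar{z}\,\overline{IO}/O}(h)=P_+\!\left(\frac{\bar{z}\,\overline{IO}}{O}\cdot IO\right)=P_+(\bar{z}\,|I|^2\,\bar{O})=P_+(\bar{z}\bar{O})=0,
\end{equation*}
since $\bar{z}\bar{O}\in\overline{zH^2}$ is orthogonal to $H^2$. This establishes that $\bar{z}\,\overline{IO}/O$ is indeed an annihilating symbol for $h$.

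The heart of the argument is the second stage. Suppose $\psi\in L^\infty(\mathbb{T})$ satisfies $h\in\ker T_\psi$. Then $\psi h\in\overline{zH^2}$, so there exists $G\in H^2$ with $\psi h=\bar{z}\bar{G}$. Using $1/I=\bar{I}$ on $\mathbb{T}$, solving yields
\begin{equation*}
\psi=\frac{\bar{z}\bar{I}\bar{G}}{O}=\frac{\bar{z}\,\overline{IO}}{O}\cdot\overline{G/O}.
\end{equation*}
Because the first factor is unimodular, $|G/O|=|\psi|\in L^\infty(\mathbb{T})$. Moreover, since $G\in H^2$ and $O$ is outer, the quotient $G/O$ lies in the Smirnov class $N^+$, and then Smirnov's theorem forces $u:=G/O\in H^\infty$. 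Writing $\psi=(\bar{z}\,\overline{IO}/O)\bar{u}$ with $u\in H^\infty$, the standard Toeplitz product identity $T_{\bar{u}\phi}=T_{\bar{u}}T_\phi$ (valid whenever $u\in H^\infty$, since then $\bar{u}\cdot\overline{zH^2}\subseteq\overline{zH^2}$) gives $T_\psi=T_{\bar{u}}T_{\bar{z}\,\overline{IO}/O}$, and hence $\ker T_\psi\supseteq\ker T_{\bar{z}\,\overline{IO}/O}$.

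The main obstacle is justifying $G/O\in H^\infty$ in the minimality step. Both ingredients are essential: the outer character of $O$ is what places the quotient $G/O$ in the Smirnov class in the first place, while the unimodularity of $\bar{z}\,\overline{IO}/O$ is what transfers the $L^\infty$ control on $\psi$ into an $L^\infty$ bound on $|G/O|$. If either were missing, the factorization would only embed $\ker T_{\bar{z}\,\overline{IO}/O}$ in a potentially strictly larger Toeplitz kernel, and the claim of \emph{minimality}, rather than mere containment, would fail.
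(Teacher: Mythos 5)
The paper does not prove this lemma; it is quoted verbatim from \cite[Theorem 3.3]{CaP2}, so there is no in-paper argument to compare against. Your proof is correct and is essentially the standard argument from that reference: the computation $T_{\bar z\,\overline{IO}/O}(IO)=P_+(\bar z\bar O)=0$ gives membership, and the factorization $\psi=(\bar z\,\overline{IO}/O)\,\bar u$ with $u=G/O\in N^+\cap L^\infty=H^\infty$ (Smirnov), combined with $T_{\bar u\phi}=T_{\bar u}T_\phi$ for $u\in H^\infty$, gives minimality; the only points worth flagging, which you handle correctly, are that the division by $h$ is legitimate because $|I|=1$ and $O\neq0$ a.e.\ on $\mathbb{T}$, and that it is precisely the outerness of $O$ plus the unimodularity of the symbol that upgrade $G/O$ from $N^+$ to $H^\infty$.
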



Now Lemma \ref{lem Toeplitz kernel} implies $c(\widetilde{p}_N)\subseteq\ma{K}_{min}(\widetilde{p}_N\phi^\delta),$  since, being a Toeplitz kernel, it will contain $\widetilde{p}_N\phi^{\lambda}$ for all $0\leq \lambda\leq \delta.$ And it yields that
 $c(\widetilde{p}_N)\subseteq ker T_d$ with the function $d\in L^\infty(\mathbb{T})$ and
$$d(z):=\frac{\overline{z\phi^\delta(z) \widetilde{p}_N(z)}}{\widetilde{p}_N(z)}=\overline{z^{N+1}\phi^\delta}(z) (\prod_{j=1}^N\overline{w_j}). $$ So we conclude that $c(\widetilde{p}_N)\subseteq  ker T_{\overline{z^{N+1}\phi^\delta}}=K_{z^{N+1}\phi^\delta}.$
Next we explore the gap between  $c(\widetilde{p}_N)$ and $K_{z^{N+1}\phi^\delta}.$

\begin{prop} \label{prop poly}Let $\widetilde{p}_N(z):=\prod_{j=1}^N (z+w_j)$ with $w_j\in \mathbb{T}$,   $j=1,\cdots, N,$ it follows that \begin{equation} c(\widetilde{p}_N)+ \phi^\delta K_{z^N}
=K_{z^{N+1}\phi^\delta}.\label{pN}\end{equation} Hence $c(\widetilde{p}_N)$ has codimension
at most $N$ in $K_{z^{N+1}\phi^\delta}.$  \end{prop}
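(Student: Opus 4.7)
The plan is to prove $c(\widetilde{p}_N)+\phi^\delta K_{z^N}=K_{z^{N+1}\phi^\delta}$; the codimension bound follows at once since $\dim \phi^\delta K_{z^N}=N$. The forward inclusion is immediate: $c(\widetilde{p}_N)\subseteq K_{z^{N+1}\phi^\delta}$ has already been noted just before the statement, and $\phi^\delta K_{z^N}\subseteq\phi^\delta K_{z^{N+1}}\subseteq K_{z^{N+1}\phi^\delta}$ via the standard factorisation $K_{z^{N+1}\phi^\delta}=K_{\phi^\delta}\oplus\phi^\delta K_{z^{N+1}}$. Since $\phi^\delta K_{z^N}$ is finite-dimensional, the sum is already closed, so for the reverse inclusion it suffices to show: any $f\in K_{z^{N+1}\phi^\delta}$ with $f\perp c(\widetilde{p}_N)$ and $f\perp\phi^\delta K_{z^N}$ must vanish.

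Using the refined decomposition $K_{z^{N+1}\phi^\delta}=K_{\phi^\delta}\oplus\phi^\delta K_{z^N}\oplus\mathbb{C}\phi^\delta z^N$, the orthogonality $f\perp\phi^\delta K_{z^N}$ forces $f=g+c\phi^\delta z^N$ with $g\in K_{\phi^\delta}$ and $c\in\mathbb{C}$. The orthogonality $f\perp c(\widetilde{p}_N)=\overline{\widetilde{p}_N K_{z\phi^\delta}}$ translates, via duality with the Toeplitz operator $T_{\ol{\widetilde{p}_N}}$, to $T_{\ol{\widetilde{p}_N}}f\in z\phi^\delta H^2(\mathbb{D})$. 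The key boundary identity
\[
\ol{\widetilde{p}_N(z)}=\ol{W}\,z^{-N}\widetilde{p}_N(z) \quad \text{on } \mathbb{T}, \qquad W:=\prod_{j=1}^N w_j,
\]
valid because $|w_j|=1$, together with the routine formula $S^{*N}(\widetilde{p}_N h)=\sum_{k=0}^N a_k S^{*(N-k)}h$ (where $\widetilde{p}_N(z)=\sum_k a_k z^k$), gives
\[
T_{\ol{\widetilde{p}_N}}(\phi^\delta z^N)=\ol{W}\,\widetilde{p}_N\phi^\delta,\qquad T_{\ol{\widetilde{p}_N}}g=q_N(S^*)g,
\]
where $q_N(t):=\prod_{j=1}^N(t+\ol{w_j})$; the factor $W$ produced by the reversed polynomial $t^N\widetilde{p}_N(1/t)=\prod(1+w_j t)=W\,q_N(t)$ cancels against the $\ol{W}$ in front since $|W|=1$.

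To separate $g$ and $c$, I would apply $T_{\ol{\phi^\delta}}$ to the resulting membership $q_N(S^*)g+c\ol{W}\widetilde{p}_N\phi^\delta\in z\phi^\delta H^2(\mathbb{D})$. Because $K_{\phi^\delta}$ is $S^*$-invariant, $q_N(S^*)g\in K_{\phi^\delta}=\ker T_{\ol{\phi^\delta}}$; meanwhile $T_{\ol{\phi^\delta}}(\widetilde{p}_N\phi^\delta)=\widetilde{p}_N$ and $T_{\ol{\phi^\delta}}(z\phi^\delta H^2(\mathbb{D}))\subseteq zH^2(\mathbb{D})$. Hence $c\ol{W}\widetilde{p}_N\in zH^2(\mathbb{D})$; evaluating at $z=0$ and using $\widetilde{p}_N(0)=W$ gives $c|W|^2=c=0$.

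With $c=0$, we are left with $q_N(S^*)g\in K_{\phi^\delta}\cap\phi^\delta H^2(\mathbb{D})=\{0\}$, so $q_N(S^*)g=0$. The main obstacle is this final injectivity step. I would handle it by showing that each factor $S^*+\ol{w_j}I$ is injective on all of $H^2(\mathbb{D})$: the equation $S^*g=-\ol{w_j}g$ forces $g(z)=g(0)/(1+\ol{w_j}z)$, and because $|w_j|=1$ this function has its pole on $\mathbb{T}$ and fails to be in $H^2(\mathbb{D})$ unless $g(0)=0$, in which case $g\equiv 0$. Composing the $N$ injective factors yields injectivity of $q_N(S^*)$ on $K_{\phi^\delta}$, so $g=0$ and consequently $f=0$.
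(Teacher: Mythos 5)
Your proof is correct, but it follows a genuinely different route from the paper's. The paper argues globally: since $\widetilde{p}_N$ is outer, the identity $\bigvee\{\phi^\lambda:\lambda\geq 0\}=H^2(\mathbb{D})$ from \eqref{sing1} gives $\bigvee\{\widetilde{p}_N\phi^\lambda,\,0\leq\lambda<\infty\}=H^2(\mathbb{D})$; it then splits off the tail $B_N:=\bigvee\{\widetilde{p}_N\phi^\lambda,\,\lambda\geq\delta\}=\phi^\delta H^2(\mathbb{D})=z^{N+1}\phi^\delta H^2(\mathbb{D})\oplus\bigl(\mathbb{C}\widetilde{p}_N\phi^\delta+\phi^\delta K_{z^N}\bigr)$, absorbs $\widetilde{p}_N\phi^\delta$ into $c(\widetilde{p}_N)$, and concludes from the density of $c(\widetilde{p}_N)+B_N$ together with the orthogonality of $c(\widetilde{p}_N)+\phi^\delta K_{z^N}$ to $z^{N+1}\phi^\delta H^2(\mathbb{D})$. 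You instead work entirely inside $K_{z^{N+1}\phi^\delta}$: you observe the sum is closed (one summand being finite-dimensional), compute its annihilator via the boundary identity $\ol{\widetilde{p}_N}=\ol{W}\,\ol{z}^N\widetilde{p}_N$ and the resulting formula $T_{\ol{\widetilde{p}_N}}=q_N(S^*)$ on $H^2(\mathbb{D})$, eliminate the $\phi^\delta z^N$-component by applying $T_{\ol{\phi^\delta}}$, and reduce to the injectivity of $\prod_j(S^*+\ol{w_j}I)$, which holds because unimodular numbers are not eigenvalues of $S^*$. All of these steps check out. The comparison is instructive: the hypothesis $|w_j|=1$ enters the paper's proof through the outerness of $\widetilde{p}_N$ (density of $\widetilde{p}_N H^2(\mathbb{D})$), whereas it enters yours through the spectral picture of $S^*$ --- two faces of the same fact. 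The paper's argument is shorter given its earlier corollary; yours is more self-contained, makes explicit exactly where the unimodular zeros are used, and explicitly describes the orthogonal complement of $c(\widetilde{p}_N)+\phi^\delta K_{z^N}$, which is additional information one could exploit to decide when the codimension in \eqref{pN} is exactly $N$.
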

\begin{proof} Since $\widetilde{p}_N(z):=\prod_{j=1}^N (z+w_j)$ is an outer function in $H^2(\mathbb{D}),$ by \eqref{sing1}, we obtain that
$$\bigvee\{\widetilde{p}_N\phi^\lambda,\;0\leq \lambda<\infty\}=H^2(\mathbb{D}).$$  Denote $B_N:=\bigvee\{\widetilde{p}_N\phi^\lambda,\;\delta\leq \lambda<\infty\}$ and use \eqref{sing1} again to obtain
\begin{align*}B_N&=\phi^\delta\bigvee\{\widetilde{p}_N\phi^\lambda,\;0\leq \lambda<\infty\}\\&=\phi^\delta H^2(\mathbb{D})\\&= \phi^\delta(z^{N+1} H^2(\mathbb{D})\oplus  K_{z^{N+1}}) \\&=z^{N+1}\phi^\delta H^2(\mathbb{D}) \oplus(\mathbb{C}\widetilde{p}_N\phi^\delta+ \bigvee\{z^k\phi^\delta,\; 0\leq k\leq N-1\})\\&= z^{N+1}\phi^\delta H^2(\mathbb{D}) \oplus(\mathbb{C}\widetilde{p}_N\phi^\delta+ \phi^\delta K_{z^N}).\end{align*}

Since $\overline{c(\widetilde{p}_N)+ B_N}=H^2(\mathbb{D})$ and $\widetilde{p}_N\phi^\delta \in c(\widetilde{p}_N),$
it always holds that
\begin{align*}\left\{
                   \begin{array}{ll}
                     &\overline{(c(\widetilde{p}_N)+\phi^\delta K_{z^N})+z^{N+1}\phi^\delta H^2(\mathbb{D})} =H^2(\mathbb{D}), \vspace{1mm}\\
                    &(c(\widetilde{p}_N)+ \phi^\delta K_{z^{N}})\perp z^{N+1}\phi^\delta H^2(\mathbb{D}).
                   \end{array}
                 \right.
                 \end{align*}
 This means
$$\overline{(c(\widetilde{p}_N)+\phi^\delta K_{z^N})\oplus z^{N+1}\phi^\delta H^2(\mathbb{D})} =H^2(\mathbb{D}),$$ which is equivalent to saying

$$(c(\widetilde{p}_N)+\phi^\delta K_{z^N})\oplus z^{N+1}\phi^\delta H^2(\mathbb{D}) =H^2(\mathbb{D}).$$ This further yields the desired result.
\end{proof}

For a more $g\in L^\infty(\mathbb{T}),$ we  deduce the following theorem on the closure of $gK_{z\phi^\delta}$.
\begin{thm}\label{thm generalg} Suppose $g(z)=\widetilde{p}_N(z)h(z)$ with $\widetilde{p}_N(z):=\prod_{j=1}^N (z+w_j)$, where $w_j\in \mathbb{T}$,  $j=1,\cdots, N,$ and $h$ is an invertible rational function in $L^\infty(\mathbb{T})$. Then $c(g)$ has codimension at most $N$ in $hK_{z^{N+1}\phi^\delta},$ that is,
\begin{equation}c(g)+h\phi^\delta K_{z^N}=hK_{z^{N+1}\phi^\delta}. \label{gN}\end{equation} \end{thm}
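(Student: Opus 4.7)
My plan is to reduce the statement directly to Proposition~\ref{prop poly} by exploiting the invertibility of $h$. Since $h$ and $h^{-1}$ both lie in $L^\infty(\mathbb{T})$, multiplication by $h$ defines a bounded linear operator $M_h$ on $L^2(\mathbb{T})$ whose inverse is $M_{h^{-1}}$. In particular $M_h$ is a topological isomorphism of $L^2(\mathbb{T})$ onto itself, so it carries closed subspaces to closed subspaces and satisfies $M_h(\overline{A})=\overline{M_h A}$ for every subset $A\subseteq L^2(\mathbb{T})$.

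The first step I would carry out is to identify $c(g)$ as $M_h$ applied to $c(\widetilde{p}_N)$. Writing $g=\widetilde{p}_N h$ and using continuity of $M_h$ and $M_{h^{-1}}$,
\begin{align*}
c(g)=\overline{gK_{z\phi^\delta}}=\overline{h\,\widetilde{p}_N K_{z\phi^\delta}}=M_h\bigl(\overline{\widetilde{p}_N K_{z\phi^\delta}}\bigr)=M_h\bigl(c(\widetilde{p}_N)\bigr).
\end{align*}

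Next, I would apply the linear isomorphism $M_h$ term-by-term to the identity
\begin{align*}
c(\widetilde{p}_N)+\phi^\delta K_{z^N}=K_{z^{N+1}\phi^\delta}
\end{align*}
supplied by Proposition~\ref{prop poly}. This directly yields
\begin{align*}
c(g)+h\phi^\delta K_{z^N}=hK_{z^{N+1}\phi^\delta},
\end{align*}
which is exactly \eqref{gN}. The codimension bound follows at once: since $\dim K_{z^N}=N$ and multiplication by the invertible $L^\infty$ factor $h\phi^\delta$ is an injective linear map, $\dim(h\phi^\delta K_{z^N})\le N$, so $c(g)$ has codimension at most $N$ in $hK_{z^{N+1}\phi^\delta}$.

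I do not anticipate any serious obstacle; the only points demanding a modest amount of care are (i) that Proposition~\ref{prop poly} actually provides a \emph{closed} algebraic sum on the left-hand side, which is transported to a closed sum by the topological isomorphism $M_h$, and (ii) that the closure identity $M_h(\overline{A})=\overline{M_h A}$ uses continuity of \emph{both} $M_h$ and $M_{h^{-1}}$ (continuity of $M_h$ alone only gives one inclusion). No further inner--outer factorization argument is required at this step, since Proposition~\ref{prop poly} has already absorbed the hard analytic work involving the outer polynomial $\widetilde{p}_N$ with zeros on $\mathbb{T}$.
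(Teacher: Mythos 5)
Your proposal is correct and follows essentially the same route as the paper, whose entire proof is the one-line observation that one may multiply the identity of Proposition~\ref{prop poly} by the invertible function $h$; you have merely spelled out the (valid) details of why multiplication by an invertible $L^\infty(\mathbb{T})$ function is a topological isomorphism preserving closures and closed sums.
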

\begin{proof}
By the fact $h$ is invertible in $L^\infty(\mathbb{T})$, we can multiply the equation \eqref{pN} by $h$  to deduce \eqref{gN}. \end{proof}

\begin{rem} In Theorem \ref{thm generalg}, letting $g(z)=(1+z)^N h(z)$ with an invertible rational function $h\in L^\infty(\mathbb{T}),$ we deduce $$\bigvee\{(1+z)^N h \phi^\lambda,\;0\leq \lambda\leq \delta\}=hK_{z^{N+1}\phi^\delta},$$ which has codimension $0$ in $hK_{z^{N+1}\phi^\delta}.$ Particularly, for $h(z)=1,$  the result $(1)$ of Theorem \ref{thm n} implies $$\bigvee\{(1+z)^N \phi^\lambda,\;0\leq \lambda\leq \delta\}=K_{z^{N+1}\phi^\delta},$$ which has codimension $0$ in $K_{z^{N+1}\phi^\delta}.$
\end{rem}

In the sequel, we apply Theorem \ref{thm generalg} to describe the corresponding case for rational functions in $H^2(\mathbb{C}_+)$. Note that a rational function $g$ in $H^2(\mathbb{C}_+)$ can be factorized as $g=g_ig_o,$ where $g_i$ is inner and hence invertible in $L^\infty(i\mathbb{R})$ (it is a Blaschke product for the right half-plane), and $g_o$ is outer (so all its zeros are in the closed left half-plane or at $\infty$). Then we can write $g_o=g_1g_2$ where $g_1$ is invertible in $L^\infty(i\mathbb{R})$ and $g_2$ has zeros in $i\mathbb{R}\cup \{\infty\}.$ So every rational function $g\in H^2(\mathbb{C}_+)$ can be represented by $g=G_1G_2$ where $G_1$ is invertible in $L^\infty(i \mathbb{R})$ and $G_2$ only has zeros in $i\mathbb{R}\cup \{\infty\}.$ Besides, we can always make the denominator of $G_2$ equal to a power of $(s+1)$ and there will always be at least $1$ as the function is in $H^2(\mathbb{C}_+).$
Now suppose the degrees of the numerator and denominator of $G_2$ are $m$ and $n$, respectively. This means $m$ is the number of imaginary axis zeros of $g$ and $g$ is asymptotic to $s^{m-n}$ at $\infty$ so $n>m.$ In particular, we write $G_2(s)=\prod_{k=1}^m(s-y_k)/(s+1)^n$ with all $y_k\in i\mathbb{R}$. So it yields that \begin{eqnarray*}V^{-1}(g)&=&\frac{2\sqrt{\pi}}{1+z} G_1(M(z))G_2(M(z))\nonumber\\&=&\sqrt{\pi}G_1\left(\frac{1-z}{1+z}\right)
\prod_{k=1}^m \left(\frac{1-z}{1+z}-y_k\right)\left(\frac{1+z}{2}\right)^{n-1}
\nonumber\\&=& 2^{1-n}  \sqrt{\pi}G_1\left(\frac{1-z}{1+z}\right)\prod_{k=1}^m \left(1-y_k-z(1+y_k)\right) (1+z)^{n-m-1},
\end{eqnarray*} with $G_1\left(\frac{1-z}{1+z}\right)$ is rational and invertible in $L^\infty(\mathbb{T})$ and the polynomial $$\prod_{k=1}^m \left(1-y_k-z(1+y_k)\right) (1+z)^{n-m-1}$$ has $n-1$ zeros on $\mathbb{T}$. Combining
this with  Theorem \ref{thm generalg}, we formulate
\begin{equation*}c(V^{-1} g)+ G_1\left(\frac{1-z}{1+z}\right)\phi^\delta K_{z^{n-1}}= G_1\left(\frac{1-z}{1+z}\right) K_{z^n\phi^\delta}. \end{equation*}  And switching into $H^2(\mathbb{C}_+)$ by the map $V$ in \eqref{Vmap}, we obtain the following theorem in $H^2(\mathbb{C}_+)$.
\begin{thm} \label{thm C+} Let $g\in H^2(\mathbb{C}_+)$ be rational with $m$ zeros on the imaginary axis and let $n>m$ such that $s^{n-m} g(s)$ tends to a finite nonzero limit at $\infty.$  Then $g$ can be written as $g=G_1G_2$, where $G_1$ is rational and invertible in $L^\infty(i\mathbb{R})$ and $G_2(s)=\prod_{k=1}^m(s-y_k)/(s+1)^n$ with all $y_k\in i\mathbb{R}$.  Then it holds that
\begin{eqnarray*}\bigvee\{g e^{-\lambda s},\;0\leq \lambda \leq \delta\}+G_1 e^{-\delta s} K_{(\frac{1-s}{1+s})^{n-1}}=G_1
K_{(\frac{1-s}{1+s})^{n}e^{-\delta s}}.\quad \end{eqnarray*}\end{thm}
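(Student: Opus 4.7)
The plan is to reduce the statement, via the isometric isomorphism $V^{-1}:H^2(\mathbb{C}_+)\to H^2(\mathbb{D})$, to the disc version already established in Theorem \ref{thm generalg}, and then translate the resulting identity back to $H^2(\mathbb{C}_+)$ using $V$. All of the hard work — namely the disc-side analysis of closures of the form $\overline{gK_{z\phi^\delta}}$ — has already been done; the present theorem is essentially a dictionary translation.

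First I would verify the factorization $g=G_1G_2$. From the inner–outer decomposition $g=g_ig_o$, the rationality of $g$ forces $g_i$ to be a finite Blaschke product of $\mathbb{C}_+$, hence invertible in $L^\infty(i\mathbb{R})$. Splitting $g_o=g_1g_2$ so that $g_2$ collects precisely the zeros lying on $i\mathbb{R}\cup\{\infty\}$ and setting $G_1:=g_ig_1$, $G_2:=g_2$ yields the claimed normal form. The exponent $n$ in the denominator $(s+1)^n$ of $G_2$ is fixed by the requirement that $g\in H^2(\mathbb{C}_+)$ together with the prescribed asymptotic behaviour $s^{n-m}g(s)\to$ nonzero finite limit at $\infty$.

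Next, applying the explicit formula \eqref{V-map} exactly as in the calculation displayed just above the theorem statement, I would rewrite
$$V^{-1}g = C\cdot h(z)\cdot \widetilde{p}_{n-1}(z),$$
with $C\neq 0$ a scalar, $h(z):=G_1\bigl((1-z)/(1+z)\bigr)$ rational and invertible in $L^\infty(\mathbb{T})$, and $\widetilde{p}_{n-1}(z):=\prod_{k=1}^m\bigl(1-y_k-z(1+y_k)\bigr)\,(1+z)^{n-m-1}$ a polynomial of degree exactly $n-1$ whose zeros all lie on $\mathbb{T}$ — the factor $1-y_k-z(1+y_k)$ vanishes at $(1-y_k)/(1+y_k)\in\mathbb{T}$ since $y_k\in i\mathbb{R}$, while $(1+z)^{n-m-1}$ vanishes at $z=-1$. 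With this data, Theorem \ref{thm generalg} applied with $N=n-1$ delivers
$$c(V^{-1}g) + h\phi^\delta K_{z^{n-1}} = h\,K_{z^n\phi^\delta}.$$

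Finally, I would push this identity back through $V$. Under $V$ one has the correspondences $h(z)\leftrightarrow G_1(s)$, $\phi^\delta\leftrightarrow e^{-\delta s}$, and $z\leftrightarrow (1-s)/(1+s)$, so that $K_{z^k\phi^\delta}$ transforms into $K_{((1-s)/(1+s))^k e^{-\delta s}}$ and $c(V^{-1}g)$ into $\bigvee\{g\,e^{-\lambda s},\ 0\le\lambda\le\delta\}$. Substituting these into the disc identity gives the desired conclusion. The main point requiring care, and the place I anticipate needing to be careful, is the bookkeeping of the scalar and outer weights $\sqrt{\pi}/(1+z)$ carried by $V^{\pm1}$: one must confirm that these weights are absorbed cleanly by the identifications of the multiplication operators and the model spaces on both sides, so that the equality of subspaces transfers verbatim rather than only up to an outer factor. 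Since $h$ and $G_1$ are both rational and invertible on the respective boundaries, this is routine but is exactly the step where the argument could go astray if the factors were not rational.
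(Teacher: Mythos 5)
Your proposal is correct and follows essentially the same route as the paper: factor $g=G_1G_2$ via the inner--outer decomposition, compute $V^{-1}g$ explicitly to exhibit it as an invertible rational function times a degree-$(n-1)$ polynomial with zeros on $\mathbb{T}$ (the weight $2\sqrt{\pi}/(1+z)$ being absorbed in that computation), apply Theorem \ref{thm generalg} with $N=n-1$, and transfer back by $V$. This matches the paper's derivation, which appears in the paragraph immediately preceding the theorem.
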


\begin{rem} Letting  $g(s)=G_1(s)/(1+s)^{n+1}$ in $H^2(\mathbb{C}_+),$ with a  rational and invertible $G_1\in L^\infty(i\mathbb{R}),$ it holds that
$$\bigvee\{G_1\frac{e^{-\lambda s}}{(1+s)^{n+1}}:\;0\leq \lambda\leq \delta\}+G_1 e^{-\delta s} K_{\left(\frac{1-s}{1+s}
\right)^{n}}= G_1K_{\left(\frac{1-s}{1+s}
\right)^{n+1}e^{-\delta s}}.$$ Particularly, for $G_1(s)=1,$  the result $(2)$ of Theorem \ref{thm n} implies $\bigvee\{\frac{e^{-\lambda s}}{(1+s)^{n+1}}:\;0\leq \lambda\leq \delta\}$ has codimension $0$ in $K_{\left(\frac{1-s}{1+s}
\right)^{n+1}e^{-\delta s}}.$ \end{rem}

\begin{rem} For $\widetilde{p}_N$ in Proposition \ref{prop poly}, it also follows that \begin{eqnarray}  \overline{c(\widetilde{p}_N)+ K_{z^N\phi^\delta}}  =K_{z^{N+1}\phi^\delta}.\quad \label{cpnK}\end{eqnarray} Meanwhile, for $g(s)=\prod_{k=1}^m(s-y_k)/(s+1)^n\in H^2(\mathbb{C}_+)$ with all $y_k\in i\mathbb{R}$, the equation \eqref{cpnK} implies that
\begin{eqnarray*}\overline{\bigvee\{g e^{-\lambda s},\;0\leq \lambda \leq \delta\}+ K_{(\frac{1-s}{1+s})^{n-1}e^{-\delta s}}}=
K_{(\frac{1-s}{1+s})^{n}e^{-\delta s}}.\quad \end{eqnarray*}\end{rem}
\begin{proof}Taking the orthogonal complement of  \eqref{pN}  in $H^2(\mathbb{D})$, we obtain \begin{eqnarray*}&&(c(\widetilde{p}_N)+ \phi^\delta K_{z^N})^\bot=(c(\widetilde{p}_N))^\bot \bigcap (\phi^\delta K_{z^N})^\bot\\&&=(c(\widetilde{p}_N))^\bot \bigcap (z^N\phi^\delta H^2\oplus K_{\phi^\delta})
\\&&=z^{N+1}\phi^\delta H^2.\end{eqnarray*}
Here we use \cite[Lemma 2.3]{CGP} to obtain  $(\phi^\delta K_{z^N})^\bot= z^N\phi^\delta H^2\oplus K_{\phi^\delta}.$ Since $ K_{\phi^\delta}$ is orthogonal to $z^N\phi^\delta H^2$ and
$z^N\phi^\delta H^2 \supseteq z^{N+1}\phi^\delta H^2$, the above formula implies
\begin{eqnarray*} (c(\widetilde{p}_N))^\bot \bigcap z^N\phi^\delta H^2=z^{N+1}\phi^\delta H^2.\end{eqnarray*}Then the desired equation \eqref{cpnK} can be deduced by taking the orthogonal complement of the above display  in $H^2(\DD)$.  This presents a link between the model spaces $K_{z^{N+1}\phi^\delta}$ and $K_{z^{N}\phi^\delta}$
in this context.
\end{proof}

\subsection*{Acknowledgments.}
 This work was done while Yuxia Liang was visiting the University of Leeds. She is grateful to the School of Mathematics at the University of Leeds for its warm hospitality. Yuxia Liang is supported  by the National Natural Science Foundation of China (Grant No. 11701422).


\begin{thebibliography}{}

\bibitem{CaP1} M.C. C\^{a}mara and J.R. Partington, Near invariance and kernels of Toeplitz operators, Journal d'Analyse Math. 124(2014), 235--260.

\bibitem{CaP} M.C. C\^{a}mara and J.R. Partington, Finite-dimensional Toeplitz kernels and nearly-invariant subspaces, J. Operator Theory, 75(1)(2016), 75--90.

\bibitem{CaP2}  M.C. C\^{a}mara and J.R. Partington, Toeplitz kernels and model spaces. The diversity and beauty of applied operator theory, 139-153, Oper. Theory Adv. Appl., 268, Birkh\"auser/Springer, Cham, 2018.

\bibitem{CGP}I. Chalendar, E.A. Gallardo-Guti\'{e}rrez and J.R. Partington, A Beurling theorem for almost-invariant subspaces of the shift operator, J. Operator Theory, 83 (2020), 321-331.

\bibitem{CGP1}I. Chalendar, P. Gorkin and J.R. Partington, Prime and semiprime inner functions, J. London Math. Soc. (2) 88 (2013), 779-800.

\bibitem{Ca} S.R. Caradus, Universal operators and invariant subspaces, Proc. Amer. Math. Soc. 23(1969), 526-527.

\bibitem{LP2} Y. Liang and J.R. Partington, Nearly invariant subspaces for operators in Hilbert spaces, Complex Anal. Oper. Theory 15, 5 (2021). 

\bibitem{Fu} P.A. Fuhrmann, Linear systems and operators in Hilbert space, McGraw-Hill, New York, 1981.

\bibitem{GMR} S. Garcia, J. Mashreghi and W.T. Ross, Introduction to Model Spaces and Their Operators, Cambridge: Cambridge University Press, 2016.

\bibitem{Ha} E. Hayashi, The kernel of a  Toeplitz operator, Integral Equations and Operator Theory, 9(4)(1986), 588--591.


\bibitem{hitt} D. Hitt, Invariant subspaces of ${\ma H}^2$ of an annulus, Pacific J. Math. 134(1)(1988), 101--120.

\bibitem{P1} J.R. Partington, Interpolation, Identification and Sampling. London Mathematical Society Monographs, 17. Clarendon Press, Oxford University Press, New York, 1997.

\bibitem{P2} J.R. Partington,  Linear Operators and Linear Systems, London Mathematical Society Student Texts, 60. Cambridge University Press, Cambridge, 2004.



\bibitem{Sa1} D. Sarason, Nearly invariant subspaces of the backward shift, Contributions to operator theory and its applications (Mesa, AZ, 1987), 481-493, Oper. Theory Adv. Appl. 35, Birkh\"{a}user, Basel, 1988.

\bibitem{Sa2} D. Sarason, Kernels of Toeplitz operators, Oper. Theory: Adv. Appl. 71(1994), 153--164.

 \end{thebibliography}
\end{document}